\numberwithin{equation}{section}
\numberwithin{figure}{section}
\theoremstyle{plain}
\newtheorem{theorem}{Theorem}\numberwithin{theorem}{section}
\newtheorem{lemma}{Lemma}\numberwithin{lemma}{section}
\numberwithin{proposition}{section}
\newtheorem{corollary}{Corollary}\numberwithin{corollary}{section}
\theoremstyle{definition}
\numberwithin{definition}{section}
\theoremstyle{remark}
\newtheorem{remark}{Remark}\numberwithin{remark}{section}
\newcommand{\R}{\mathbb{R}}
\newcommand{\J}{\mathbb{J}}
\newcommand{\F}{{}_1F_2}
\title{Rational extension of Newton diagram for\\ the positivity of
$\F$ hypergeometric functions\\ and Askey-Szeg\"o problem}
\author{Yong-Kum Cho, Seok-Young Chung and Hera Yun}
\begin{document}
\maketitle

{\bf Abstract.} We present a rational extension of Newton diagram for the positivity
of $\F$ generalized hypergeometric functions. As an application, we give upper and lower bounds for
the transcendental roots $\beta(\alpha)$ of
\begin{align*}
\int_0^{j_{\alpha, 2}} t^{-\beta} J_\alpha(t) dt = 0\qquad(-1<\alpha\le 1/2),
\end{align*}
where $j_{\alpha, 2}$ denotes the second positive zero of Bessel function $J_\alpha$.

\bigskip

{\bf Keywords.} {\small Bessel function, generalized hypergeometric function, Newton diagram, positivity,
Saalsch\"utzian.}

\bigskip

{\small {\bf  2010 Mathematics Subject Classification:} 26D15, 33C10, 33C20.}

\section{Introduction}
We consider the problem of determining $(\alpha, \beta)$ for
\begin{equation}\label{AG1}
\int_0^x t^{-\beta} J_\alpha(t)dt\ge 0\qquad(x>0),
\end{equation}
where $J_\alpha$ stands for the first-kind Bessel function of order $\alpha$. For the sake of convergence and application,
it will be assumed $\,\alpha>-1, \,\beta<\alpha+1.$

Owing to various applications, the problem has been studied by many authors over a long period of time.
In connection with the monotonicity of Bessel functions, for instance, the problem dates back to Bailey \cite{Ba2} and
Cooke \cite{C}. We refer to Askey \cite{As1}, \cite{As2} for further historical backgrounds.

By interpolating known results for some special cases in certain way, Askey \cite{As2} described an explicit range of
parameters as follows.

\medskip

\noindent
{\bf Theorem A.}\hskip.2cm Let $\mathcal{P}$ be the set of $\,(\alpha, \beta)\in\R^2\,$ defined by
$$\mathcal{P} =\bigl\{\alpha>-1,\, \,0\le\beta<\alpha+1\bigr\}
\cup\left\{\alpha\ge 0,\,\,\max\left(-\alpha,\,-\frac 12\right)\le\beta\le 0\right\}.$$
\begin{itemize}
\item[(i)] For each $\,(\alpha, \beta)\in\mathcal{P},\,$ the inequality of \eqref{AG1} holds with strict positivity
unless it coincides with $\,(1/2, -1/2).$
\item[(ii)] If $\,\alpha>-1, \,\beta<-1/2\,,$ then \eqref{AG1} does not hold.
\end{itemize}

\begin{figure}[!h]
 \centering
 \includegraphics[width=300pt, height=220pt]{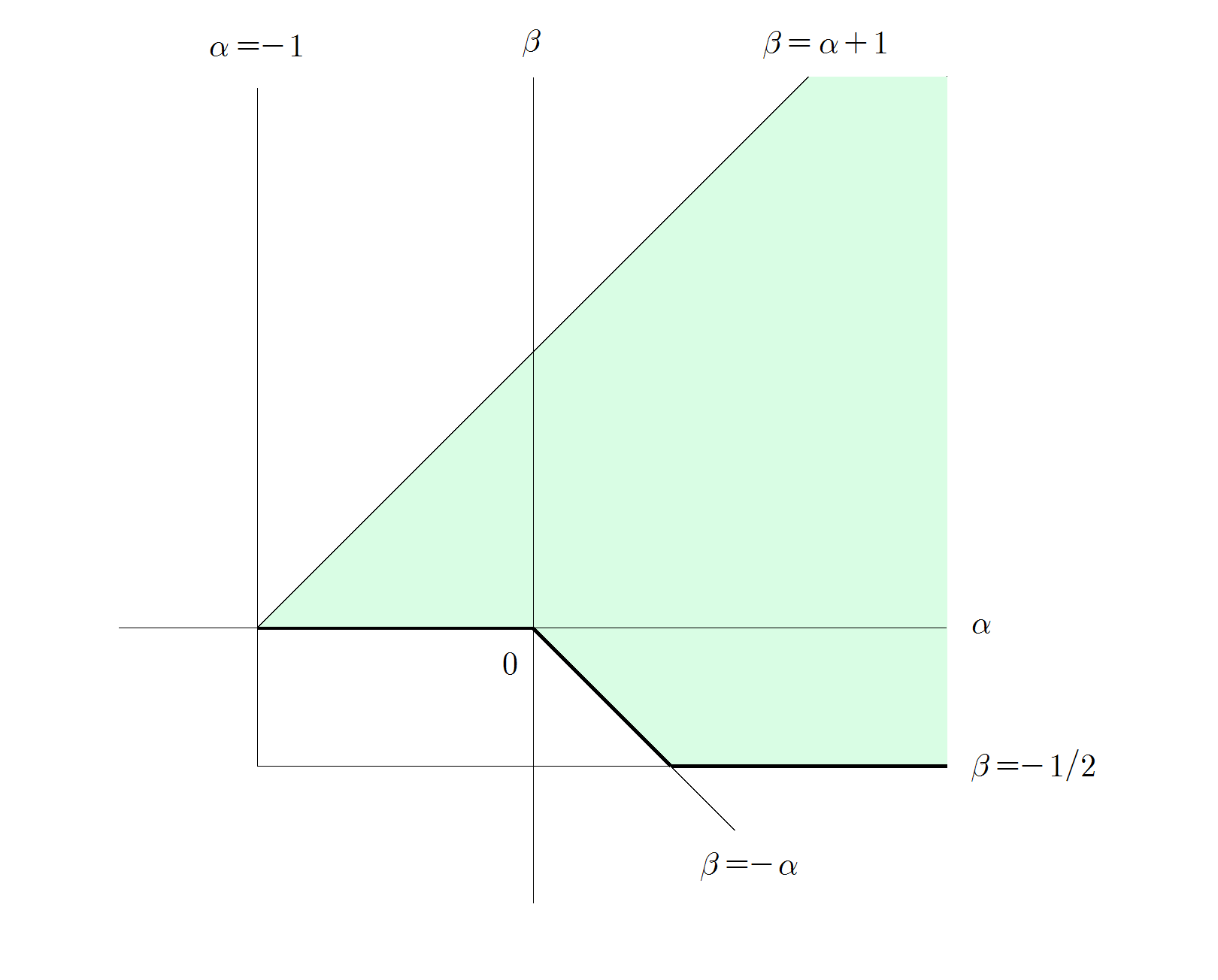}
 \caption{The known positivity region for problem \eqref{AG1}.}
\label{Fig1}
\end{figure}

As it is shown in Figure 1, the positivity region $\mathcal{P}$ represents an infinite
polygon enclosed by four boundary lines
$$\beta=\alpha+1, \,\,\beta=0, \,\,\beta=-\alpha, \,\,\beta=-1/2.$$
By part (ii), observed by Steinig \cite{St}, Theorem A leaves only the trapezoid
\begin{equation}\label{AG2}
\mathcal{T}=\left\{-1<\alpha<\frac 12,\,\,-\frac 12\le \beta<\min\left(0, \,\,-\alpha\right)\right\}
\end{equation}
undetermined in regards to problem \eqref{AG1}.

As for this missing region, the best possible range of parameters is known in an implicit formulation
which involves roots of certain transcendental equations. To be precise, we follow Askey's summary \cite{As2} to state

\medskip

\noindent
{\bf Theorem B.}\hskip.2cm Let $j_{\alpha, 2}\,$ be the second positive zero of $J_\alpha(t),\,\alpha>-1.$
\begin{itemize}
\item[(i)] For $\,-1<\alpha\le 1/2,\,$ \eqref{AG1} holds if and only if
$\,\beta\ge\beta(\alpha),\,$ where $\beta(\alpha)$ denotes the unique zero of
\begin{equation}\label{AG3}
A(\beta) =\int_0^{j_{\alpha, 2}} t^{-\beta} J_\alpha(t) dt,\quad -\frac 12<\beta<\alpha+1.
\end{equation}
\item[(ii)] As a special case of \eqref{AG1}, the inequality
\begin{equation}\label{AG4}
\int_0^x t^{-\alpha} J_\alpha(t)dt \ge 0\qquad(x>0)
\end{equation}
holds for $\,\alpha\ge\bar{\alpha},\,$ where $\bar{\alpha}$ denotes the unique zero of
\begin{equation}\label{AG5}
G(\alpha) = \int_0^{j_{\alpha, 2}} t^{-\alpha} J_\alpha(t) dt,\quad \alpha>-\frac 12.
\end{equation}
\end{itemize}

\medskip

Regarding part (i), the existence and uniqueness of such a zero as well as the positivity of \eqref{AG1}
is due to Makai \cite{Ma1}, \cite{Ma2} when $\,-1/2<\alpha<1/2,\,$ Askey and Steinig \cite{AS} when $\,-1<\alpha<-1/2,\,$ respectively.
The remaining case $\,\alpha=\pm 1/2\,$ follows by an integration by parts.

Part (ii) is obtained by G. Szeg\"o \cite{Fe} much earlier and reproved by Koumandos \cite{K}, Lorch, Muldoon and P. Szeg\"o \cite{LMS}.
Since \eqref{AG4} is a special case of \eqref{AG1} and part (i) gives a necessary and sufficient condition
for \eqref{AG1}, it is equivalent to define $\bar{\alpha}$ as the unique
solution of $\,\beta(\alpha)=\alpha.$

A major drawback of Theorem B lies in the intricate nature of the zeros $\beta(\alpha)$
and $\bar{\alpha}$. As it is pointed out by Askey \cite{As2}, in fact,
essentially nothing has been known yet on the nature of $\beta(\alpha)$ and $\bar{\alpha}$ except a few
numerical simulations and trivial limiting behavior $\,\lim_{\alpha\to -1 +} \beta(\alpha) = 0.$

In this paper we aim at extending the positivity region $\mathcal{P}$ of Theorem A
and thereby obtaining informative bounds of $\beta(\alpha)$ and $\bar{\alpha}$ which provide
an insight into their nature and an approximating means in practical use.

By making use of the identity (Luke \cite{L}, Watson \cite{Wa})
\begin{equation}\label{SF1}
J_\alpha(t) = \frac{1}{\Gamma(\alpha+1)} \left(\frac t2\right)^\alpha{}_0F_1\left(\alpha+1\,;-\frac{t^2}{4}\right)
\qquad(\alpha>-1)
\end{equation}
and integrating termwise, it is easy to see
\begin{align}\label{AG6}
\int_0^x t^{-\beta}J_\alpha(t) dt &= \frac{x^{\alpha-\beta+1}}{2^\alpha(\alpha-\beta+1)\Gamma(\alpha+1)}\nonumber\\
&\qquad\times\, \F\left[\begin{array}{c} \frac{\alpha-\beta+1}{2}\\ \alpha+1,
\frac{\alpha-\beta+3}{2}\end{array} \biggl| -\frac{\,x^2}{4}\right]
\end{align}
and hence problem \eqref{AG1} is equivalent to the problem of positivity for the functions
defined on the right side of \eqref{AG6}.

More generally, we shall be concerned with the positivity of generalized
hypergeometric functions of type
\begin{equation}\label{AG7}
\F\left[\begin{array}{c}
a\\ b, c\end{array}\biggr| -\frac{\,x^2}{4}\right]\qquad(x>0)
\end{equation}
with parameters $\,a>0, \,b>0, \,c>0.$
In the recent work \cite{CY}, to be explained in detail, a positivity criterion
for the functions of type \eqref{AG7} is established in terms of the Newton diagram associated to
$\,\{(a+1/2, 2a), \,(2a, a+1/2)\}.\,$ Due to certain region of parameters left undetermined, however, it turns out
that an application of the criterion to \eqref{AG6} yields Theorem A immediately
but does not cover the missing region $\mathcal{T}$ either.

The main purpose of this paper is to give an extension of the Newton diagram
which leads to an improvement of Theorem A in an explicit way and provides information on the
nature of $\beta(\alpha)$ and $\bar{\alpha}$.

As it is more or less standard in the theory of special functions,
we shall carry out Gasper's {\it sums of squares method} \cite{Ga1}
for investigating positivity,
which essentially reduces the matter to how to determine the signs of ${}_4F_3$ terminating series given in the form
\begin{equation}\label{AG8}
{}_{4}F_{3} \left[\begin{array}{c} -n, n+\alpha_1, \alpha_2, \alpha_3\\
\beta_1, \beta_2, \beta_{3}\end{array}\right], \quad n=1, 2, \cdots,
\end{equation}
for appropriate values of $\,\alpha_j, \beta_j\,$ expressible in terms of $\,a, b, c.\,$

From a technical point of view, if we express \eqref{AG8} as a finite sum with index $k$,
it is the alternating factor $(-n)_k$ that causes main difficulties in analyzing its sign.
To circumvent, we shall apply Whipple's transformation formula
to convert it into a ${}_7F_6$ terminating series which does not involve such an alternating factor.
By estimating a lower bound for the transformed series,
we shall deduce positivity in an inductive way.

While Askey and Szeg\"o studied problem \eqref{AG1} primarily as a limiting case for the positivity of
certain sums of Jacobi polynomials, there are many other applications and generalizations (see e.g.
\cite{Fe, Ga1, Ga2, MR}). As an exemplary generalization, we shall consider
the integrals of type
$$\int_0^x (x^2- t^2)^\gamma t^{-\beta}J_\alpha(t) dt\qquad(x>0),$$
and obtain the range of parameters for its positivity by applying our new criterion, which improves the work
of Gasper \cite{Ga1} considerably.

\section{Preliminaries}
As it is standard, given nonnegative integers $p, q$, we shall define and write ${}_pF_q$
generalized hypergeometric functions in the form
\begin{equation}\label{A1}
{}_pF_q\left[\begin{array}{c} \alpha_1, \cdots, \alpha_p\\
\beta_1, \cdots, \beta_q\end{array}\biggr| \,z\right]
= \sum_{k=0}^\infty \frac{(\alpha_1)_k\cdots (\alpha_p)_k}{k!\,(\beta_1)_k
\cdots (\beta_q)_k}\, z^k\qquad(z\in\mathbb{C}),
\end{equation}
where the coefficients are written in Pochhammer's notation, that is,
for any $\,\alpha\in\R,\,$ $\,(\alpha)_k = \alpha(\alpha+1)\cdots(\alpha+k-1)\,$ when $\,k\ge 1\,$
and $\,(\alpha)_0=1.$ In the case when $\,z=1,\,$ we shall delete
the argument $z$ in what follows.

A function of type \eqref{A1} is said to be {\it Saalsch\"utzian}
when the parameters satisfy the condition
$\,1+\alpha_1+\cdots+\alpha_p = \beta_1+\cdots +\beta_q.\,$
If one of the numerator-parameters $\alpha_j$ is a negative integer, e.g., $\,\alpha_1=-n\,$
with $n$ a positive integer, then it becomes a {\it terminating series} given by
\begin{equation}\label{A2}
{}_pF_q\left[\begin{array}{c} -n, \alpha_2, \cdots, \alpha_p\\
\beta_1, \cdots, \beta_q\end{array}\biggr| \,z\right]
= \sum_{k=0}^n (-1)^k \binom nk \frac{(\alpha_2)_k\cdots (\alpha_p)_k}{(\beta_1)_k \cdots (\beta_q)_k}\, z^k.
\end{equation}

For the generalized hypergeometric functions of type \eqref{A1} which are both terminating and Saalsch\"utzian,
there are a number of formulas available for summing or transforming into other terminating series.
Of particular importance will be the following extracted from Bailey \cite{Ba}.

\begin{itemize}
\item[(i)] (Saalsch\"utz's formula, \cite [2.2(1)] {Ba})

If $\,1+\alpha_1+\alpha_2=\beta_1+\beta_2,\,$ then
\begin{align}\label{A3}
{}_3F_2\left[\begin{array}{c} -n, n+\alpha_1, \alpha_2
\\{} \beta_1, \beta_2\end{array}\right] =
\frac{\left(\beta_1-\alpha_2\right)_n \left(\beta_2-\alpha_2\right)_n}
{\left(\beta_1\right)_n\left(\beta_2\right)_n}\,.
\end{align}
\item[(ii)] (Whipple's transformation formula, \cite[4.3(4)]{Ba})

If $\,1+\alpha_1+\alpha_2 +\alpha_3= \beta_1 +\beta_2 +\beta_3,\,$
then
\begin{align}\label{A4}
{}_{4}F_{3} &\left[\begin{array}{c} -n, n+\alpha_1, \alpha_2, \alpha_3\\
{} \beta_1, \beta_2, \beta_{3}\end{array}\right]
=\frac{(1+\alpha_1-\beta_3)_n (\beta_3-\alpha_2)_n}
{(1+\sigma)_n (\beta_3)_n}\quad\times\nonumber\\
&\qquad {}_7F_6\biggl[\begin{array}{cccc} \sigma, &1+ \sigma/2, &-n, &n+\alpha_1,\\
&\sigma/2, &n+1+\sigma, &-n+1+\alpha_2-\beta_3,\end{array}\nonumber\\
&\qquad\qquad\qquad\qquad\begin{array}{ccc} \alpha_2, &\beta_1-\alpha_3, &\beta_2-\alpha_3\\
1+\alpha_1-\beta_3, &\beta_2, &\beta_1
\end{array}\biggr],
\end{align}
where we put $\,\sigma=\alpha_1+\alpha_2-\beta_3.\,$ It is a modification of the original
form suitable to the present application and arranged in such a way that the sums of columns of the ${}_6F_6$ terminating series,
obtained from deleting $\sigma$,
are all equal to $1+\sigma$.
\end{itemize}

\section{Positivity of ${}_4F_3$ terminating series}
The purpose of this section is to prove the following positivity result for
a special class of terminating ${}_4F_3$ generalized hypergeometric series, which will be crucial
in our subsequent developments.

\smallskip

\begin{lemma}\label{lemmaA2}
For each positive integer $n$, put
$$\Theta_n={}_{4}F_{3} \left[\begin{array}{c} -n, n+\alpha_1, \alpha_2, \alpha_3\\
\beta_1, \beta_2, \beta_{3}\end{array}\right].$$
Suppose that $\,\alpha_j, \beta_j\,$ satisfy the following assumptions simultaneously:
\begin{equation}\label{A}
\left\{\begin{aligned}
&{{\rm(A1)}\quad 1+\alpha_1+\alpha_2+\alpha_3 = \beta_1+\beta_2+\beta_3,}\\
&{{\rm(A2)}\quad 0<\alpha_2 <\beta_3\le 2+\alpha_1,}\\
&{{\rm(A3)}\quad 0<\alpha_3 <\min \bigl(\beta_1,\,\beta_2\bigr),}\\
&{{\rm(A4)}\quad (1+\alpha_1)\alpha_2\alpha_3 \le \beta_1\beta_2\beta_3.}\end{aligned}\right.
\end{equation}
Then $\,\Theta_1\ge 0\,$ and $\,\Theta_n >0\,$ for all $\,n\ge 2.$
\end{lemma}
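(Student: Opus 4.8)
The plan is to apply Whipple's transformation formula \eqref{A4} to rewrite $\Theta_n$ as a terminating ${}_7F_6$ series, thereby eliminating the troublesome alternating factor $(-n)_k$ that obstructs a direct sign analysis. The Saalsch\"utzian condition (A1) is precisely what licenses this transformation. First I would verify that the prefactor
\begin{equation*}
\frac{(1+\alpha_1-\beta_3)_n\,(\beta_3-\alpha_2)_n}{(1+\sigma)_n\,(\beta_3)_n},
\qquad \sigma=\alpha_1+\alpha_2-\beta_3,
\end{equation*}
is strictly positive under the hypotheses \eqref{A}. The factor $(\beta_3-\alpha_2)_n>0$ follows from $\alpha_2<\beta_3$ in (A2), and $(\beta_3)_n>0$ since $\beta_3>0$; the condition $\beta_3\le 2+\alpha_1$ in (A2) is designed to guarantee $1+\alpha_1-\beta_3>-1$, keeping $(1+\alpha_1-\beta_3)_n$ from changing sign, while $(1+\sigma)_n$ is controlled by the same inequality. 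Checking that these Pochhammer symbols are genuinely positive (rather than merely nonzero) for every $n$ is the first technical hurdle, since $\sigma$ may be negative.

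Having reduced to the ${}_7F_6$ series, I would then estimate it term by term. After deleting the factor $\sigma$ from the transformation, the remaining ${}_6F_6$ is arranged so that each column sums to $1+\sigma$, which is the structural feature the paper emphasizes; the plan is to exploit this balancing to control the ratios of consecutive terms. Writing the ${}_7F_6$ as $\sum_k c_k$, the factor $(\sigma/2 + k)/(\sigma/2)=1+2k/\sigma$ coming from the $1+\sigma/2$ over $\sigma/2$ entries supplies the crucial positive weight, and I would separate the $k=0$ term (equal to $1$) from the tail, bounding the tail below. The key inequalities (A3), namely $0<\alpha_3<\min(\beta_1,\beta_2)$, ensure that the ratios $(\beta_1-\alpha_3)_k/(\beta_1)_k$ and $(\beta_2-\alpha_3)_k/(\beta_2)_k$ lie in $(0,1]$, while (A2) similarly controls $(\alpha_2)_k/(1+\alpha_1-\beta_3)_k$ and the $\beta_3$-related ratios.

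The hard part will be organizing these individual ratio estimates into a clean lower bound that the product condition (A4), $(1+\alpha_1)\alpha_2\alpha_3\le\beta_1\beta_2\beta_3$, can close off. I expect (A4) to enter precisely at the level of the leading correction term: comparing the $k=1$ contribution against $1$ should reduce, after the prefactor cancellations, to an inequality of the form $(1+\alpha_1)\alpha_2\alpha_3 \le \beta_1\beta_2\beta_3$ up to the balancing factor $1+\sigma$. This suggests an inductive strategy on $n$, where positivity of $\Theta_n$ for $n\ge 2$ is bootstrapped from the base cases, with the $n=1$ case yielding only $\Theta_1\ge 0$ because there the relevant inequality degenerates to equality exactly when (A4) holds with equality. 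I would therefore treat $n=1$ separately by direct computation — it is a short explicit sum — and then establish $\Theta_n>0$ for $n\ge 2$ by showing the transformed series is bounded below by a strictly positive quantity, using the monotonicity of the term ratios guaranteed by (A2) and (A3) together with the sharp product bound (A4).
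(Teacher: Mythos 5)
Your opening move (Whipple's transformation \eqref{A4} to kill the alternating factor $(-n)_k$) is the same as the paper's, and your instincts about where (A4) enters (the $n=1$ case, which degenerates to equality exactly when (A4) does) are correct. But there is a genuine error at the foundation of your plan: the prefactor
$\frac{(1+\alpha_1-\beta_3)_n(\beta_3-\alpha_2)_n}{(1+\sigma)_n(\beta_3)_n}$
is \emph{not} strictly positive under \eqref{A}. Assumption (A2) only gives $\beta_3\le 2+\alpha_1$, so the case $1+\alpha_1<\beta_3\le 2+\alpha_1$ is allowed; there $1+\alpha_1-\beta_3\in[-1,0)$, hence $(1+\alpha_1-\beta_3)_n=(1+\alpha_1-\beta_3)(2+\alpha_1-\beta_3)_{n-1}\le 0$ for every $n\ge 1$. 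In that same regime the denominator parameter $1+\alpha_1-\beta_3$ of the ${}_7F_6$ makes every term with $k\ge 1$ nonpositive, so your strategy of ``the $k=0$ term is $1$, bound the tail below'' produces a quantity of the wrong sign to multiply against a nonpositive prefactor, and the whole scheme collapses precisely in the only nontrivial case. (When $\beta_3\le 1+\alpha_1$ everything is termwise nonnegative and there is nothing to prove.)

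The missing idea is a redistribution of the Pochhammer symbols: using $(\alpha)_n=(\alpha)_k(k+\alpha)_{n-k}$, the paper folds $(1+\alpha_1-\beta_3)_n$ and $(\beta_3-\alpha_2)_n$ into the sum, obtaining $\Theta_n=\Omega_n/[(1+\sigma)_n(\beta_3)_n]$ with
$\Omega_n=\sum_{k}\binom nk(k+1+\alpha_1-\beta_3)_{n-k}(\beta_3-\alpha_2)_{n-k}\cdot(\text{positive factors})$,
in which \emph{only} the $k=0$ term can be negative (for $k\ge1$ the leading factor starts at $k+1+\alpha_1-\beta_3\ge 2+\alpha_1-\beta_3\ge0$). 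Handling that single negative term still requires two separate arguments: when $\beta_3\ge 1+\alpha_2$, a monotonicity estimate on the ratio of the first two terms gives $\Omega_n>(2+\alpha_1-\beta_3)_{n-1}(1+\beta_3-\alpha_2)_{n-1}\Omega_1$; when $\beta_3<1+\alpha_2$, one needs a genuine induction $\Omega_{n+1}>A_{n+1,n+1}B_{n+1}+c_n\Omega_n$ built from termwise comparisons $A_{n+1,k}\ge c_nA_{n,k}$ ($k\ge1$) together with a reversed inequality for the negative initial term. Neither the redistribution nor this case split appears in your proposal, and without them the ``clean lower bound'' you hope for cannot be organized; so the proposal as written has a real gap beyond mere technical detail.
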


\begin{proof}
We apply Whipple's transformation formula to transform $\Theta_n$ into
a product of ${}_3F_2$ and ${}_7F_6$ terminating series as stated in \eqref{A4}.
By using
$$(\alpha)_n = (\alpha)_k (k+\alpha)_{n-k}\,,\quad (\alpha)_n= (\alpha)_{n-k}(n-k+\alpha)_k, $$
valid for any real number $\alpha$ and $\,k=0, \cdots, n,\,$ it is equivalent to
\begin{align}\label{A5}
\Theta_n &= \frac{1}
{(1+\sigma)_n (\beta_3)_n}\,\Omega_n\,,\nonumber\\
\Omega_n &=\sum_{k=0}^n \binom nk (k+1+\alpha_1-\beta_3)_{n-k}(\beta_3-\alpha_2)_{n-k}\,\frac{
(n+\alpha_1)_k}{(n+1+\sigma)_k}
\nonumber\\
&\qquad\times\,\, \frac{(\alpha_2)_k (\beta_1-\alpha_3)_k(\beta_2-\alpha_3)_k}{(\beta_1)_k (\beta_2)_k}
\frac{(\sigma)_k (1+\sigma/2)_k}{(\sigma/2)_k},
\end{align}
where $\,\sigma = \alpha_1+\alpha_2-\beta_3\,$ and the last factor must be understood as
\begin{equation}\label{A5-1}
\frac{(\sigma)_k (1+\sigma/2)_k}{(\sigma/2)_k} = \left\{
\begin{aligned} &{\qquad 1} &{\text{for}\quad k=0,}\\
&{(1+\sigma)_{k-1}(2k+\sigma)} &{\text{for}\quad k\ge 1.}
\end{aligned}\right.
\end{equation}

By the Saalsch\"utzian condition of (A1) and (A3), we observe that
$$\,1+\sigma=\beta_1+\beta_2-\alpha_3>0\,$$ and hence the
positivity or nonnegativity of $\Theta_n$ reduces to that of $\Omega_n$. We also note that the assumptions of
(A1), (A2), (A3) imply
$$\,1+\alpha_1= \beta_1+ (\beta_2 -\alpha_3)+ (\beta_3-\alpha_2)>0.$$ As a consequence, if
$\,\beta_3\le 1+\alpha_1,\,$ then the first term is nonnegative and all of other terms are positive so that
$\,\Omega_n>0\,$ for each $\,n\ge 1.$ Therefore it suffices to deal with the case
$\,\beta_3>1+\alpha_1,\,$ which will be assumed hereafter.

In the special case $\,n=1,\,$
it is a matter of algebra to factor out
\begin{align}\label{A6}
\Omega_1 &= (1+\alpha_1-\beta_3)(\beta_3-\alpha_2) + \frac{(1+\alpha_1)\alpha_2(\beta_1-\alpha_3)(\beta_2-\alpha_3)}
{\beta_1\beta_2}\nonumber\\
 &=\frac{(\beta_1+\beta_2-\alpha_3)\bigl[\beta_1\beta_2\beta_3-(1+\alpha_1)\alpha_2\alpha_3\bigr]}
{\beta_1\beta_2}
\end{align}
which clearly shows $\,\Omega_1\ge 0\,$ under the stated assumptions.

For $\,n\ge 2,\,$ we shall deduce the strict positivity of $\Omega_n$ by considering
each case $\,\beta_3\ge 1+\alpha_2,\,\beta_3<1+\alpha_2\,$ separately
in the following manner.

\bigskip

{\bf I. The case $\,\beta_3\ge 1+\alpha_2.$} \hskip.2cm We claim that
\begin{equation}\label{A7}
\Omega_n>(2+\alpha_1-\beta_3)_{n-1}(1+\beta_3-\alpha_2)_{n-1}\Omega_1.
\end{equation}

To verify, we observe that each term of $\Omega_n$ except the first one
is positive so that $\Omega_n$ exceeds the sum of the first two terms, which implies
\begin{align}\label{A7-1}
\Omega_n &> (2+\alpha_1-\beta_3)_{n-1}(\beta_3-\alpha_2)_n\nonumber\\
&\qquad\times \biggl[ 1+\alpha_1-\beta_3 +\frac{n(n+\alpha_1)\alpha_2(\beta_1-\alpha_3)(\beta_2-\alpha_3)(2+\sigma)}
{(n+1+\sigma)(n-1+\beta_3-\alpha_2)\beta_1\beta_2}\biggr].
\end{align}

If we set
\begin{align*}
f(n) &= \frac{n(n+\alpha_1)}{(n+1+\sigma)(n-1+\beta_3-\alpha_2)}\\
&= \frac{n^2 + \alpha_1n}{n^2 + \alpha_1 n + (1+\sigma)(\beta_3-1-\alpha_2)}
\end{align*}
and regard $n$ as a continuous variable, then the derivative of $f$ is given by
\begin{equation*}
f'(n) = \frac{(2n+\alpha_1)(1+\sigma)(\beta_3-1-\alpha_2)}
{\left[n^2 + \alpha_1 n + (1+\sigma)(\beta_3-1-\alpha_2)\right]^2}.
\end{equation*}
Due to the case assumption, it shows $\,f'(n)\ge 0\,$
on the interval $[1, \infty)$ and hence we may conclude $\,f(n)\ge f(1),\,$ that is,
$$ \frac{n(n+\alpha_1)}{(n+1+\sigma)(n-1+\beta_3-\alpha_2)}\ge \frac{1+\alpha_1}{(2+\sigma)(\beta_3-\alpha_2)}.$$

Reflecting this estimate in \eqref{A7-1} and simplifying, we obtain
\begin{align*}
\Omega_n &> (2+\alpha_1-\beta_3)_{n-1}\frac{(\beta_3-\alpha_2)_n}{(\beta_3-\alpha_2)}\\
&\qquad\times\,\biggl[(1+\alpha_1-\beta_3)(\beta_3-\alpha_2)
+\frac{(1+\alpha_1)\alpha_2(\beta_1-\alpha_3)(\beta_2-\alpha_3)}
{\beta_1\beta_2}\biggr]\\
&=(2+\alpha_1-\beta_3)_{n-1}(1+\beta_3-\alpha_2)_{n-1}\Omega_1,
\end{align*}
which proves \eqref{A7}. The strict positivity of $\Omega_n$ is an immediate consequence
of this inequality and the nonnegativity of $\Omega_1$.

\bigskip

{\bf II. The case $\,\beta_3<1+\alpha_2.$} \hskip.2cm In this case, we shall deduce the strict
positivity of $\Omega_n$ by induction on $n$. To simplify notation, we put
\begin{align*}
A_{n, k} &= \binom nk (k+1+\alpha_1-\beta_3)_{n-k}(\beta_3-\alpha_2)_{n-k}\,\frac{
(n+\alpha_1)_k}{(n+1+\sigma)_k},\\
B_k &=\frac{(\alpha_2)_k (\beta_1-\alpha_3)_k(\beta_2-\alpha_3)_k}{(\beta_1)_k (\beta_2)_k}
\frac{(\sigma)_k (1+\sigma/2)_k}{(\sigma/2)_k}
\end{align*}
so that $\,\Omega_n = \sum_{k=0}^n A_{n, k} B_k\,.$ By the stated assumptions
and \eqref{A5-1}, we note that
$\,A_{n, 0}<0\,$ but $\,A_{n, 1}\ge 0,\,\, A_{n, k}>0\,$ for $\,2\le k\le n\,$ and
$\,B_k>0\,$ for each $\,0\le k\le n.$ In this notation we claim that
\begin{equation}\label{A8}
\Omega_{n+1}>A_{n+1, n+1} B_{n+1} +  \left[\frac{(n+1)(n+1+\alpha_1-\beta_3)(n+1+\sigma)}{n+\alpha_1}\right]\Omega_n.
\end{equation}
As it is already shown that $\,\Omega_1\ge 0,\,$ once \eqref{A8} were true, it follows by an obvious induction argument that
we may conclude $\,\Omega_n>0\,$ for all $\,n\ge 2\,$ .

To verify, we make use of the identities
\begin{align*}
\binom{n+1}{k} &=\binom nk \,\frac{n+1}{n+1-k}\,,\\
(k+1+\alpha_1-\beta_3)_{n+1-k}&= (k+1+\alpha_1-\beta_3)_{n-k}\,(n+1+\alpha_1-\beta_3),\\
(\beta_3-\alpha_2)_{n+1-k} &= (\beta_3-\alpha_2)_{n-k}\,(n-k+\beta_3-\alpha_2),\\
(n+1+\alpha_1)_k &= (n+\alpha_1)_k\,\frac{n+k+\alpha_1}{n+\alpha_1}\,,\\
(n+2+\sigma)_k &= (n+1+\sigma)_k\,\frac{n+1+k+\sigma}{n+1+\sigma}\,,
\end{align*}
to write $A_{n+1, k}$ in the form
\begin{align*}
A_{n+1, k} &= A_{n, k} \left[\frac{(n+1)(n+1+\alpha_1-\beta_3)(n+1+\sigma)}{n+\alpha_1}
\right] g_n(k),\\
g_n(k) &=  \frac{(k+n+\alpha_1)(k-n+\alpha_2-\beta_3)}{(k-n-1)(k+n+1+\sigma)}\\
&= \frac{k^2 +\sigma k -(n+\alpha_1)(n+\beta_3-\alpha_2)}{k^2+\sigma k -(n+1)(n+1+\sigma)}.
\end{align*}

Regarding $k$ as a continuous variable as before, we differentiate
\begin{align*}
g_n'(k) = \frac{(2k+\sigma)(\beta_3-\alpha_2-1)(2n+1+\alpha_1)}
{\left[k^2+\sigma k -(n+1)(n+1+\sigma)\right]^2}.
\end{align*}
By the case assumption, it shows $\,g_n'(k)<0\,$ on the interval $[1, \infty)$.
In view of the limiting behavior $\,g_n(k)\to 1\,$ as $\,k\to\infty,\,$ hence,
we may conclude $\,g_n(k)>1\,$ for $\,k=1, \cdots, n,\,$
which leads to the estimate
\begin{equation}\label{A9}
A_{n+1, k} \ge A_{n, k} \left[\frac{(n+1)(n+1+\alpha_1-\beta_3)(n+1+\sigma)}{n+\alpha_1}
\right]
\end{equation}
for each $\,k=1, \cdots, n\,$ with strict inequalities when $\,k\ge 2.$

As for the initial term $A_{n+1, 0}$, we may write
\begin{align*}
A_{n+1, 0} = A_{n, 0} \bigl[(n+1+\alpha_1-\beta_3)(n+\beta_3-\alpha_2)].
\end{align*}
We observe that an upper bound for the last factor is given by
$$ n+\beta_3-\alpha_2<\frac{(n+1)(n+1+\sigma)}{n+\alpha_1},$$
which follows easily from the sign of cross difference
\begin{align*}
&(n+1)(n+1+\sigma) - (n+\beta_3-\alpha_2)(n+\alpha_1)\\
&\qquad= 2(1+\alpha_2-\beta_3) n + (1+\alpha_1)(1+\alpha_2-\beta_3)\\
&\qquad=(1+\alpha_2-\beta_3)(2n+1+\alpha_1) >0
\end{align*}
due to the case assumption. Since $\,A_{n, 0}<0,\,$ this upper bound gives
\begin{equation}\label{A10}
A_{n+1, 0} >A_{n, 0} \left[\frac{(n+1)(n+1+\alpha_1-\beta_3)(n+1+\sigma)}{n+\alpha_1}\right].
\end{equation}

Multiplying each term of \eqref{A9}, \eqref{A10} by $B_k$
and adding up, we obtain
\begin{align*}
\Omega_{n+1} &= A_{n+1, n+1} B_{n+1} + \sum_{k=0}^n A_{n+1, k} B_k\nonumber\\
& >  A_{n+1, n+1} B_{n+1} +  \left[\frac{(n+1)(n+1+\alpha_1-\beta_3)(n+1+\sigma)}{n+\alpha_1}\right]
\sum_{k=0}^n A_{n, k} B_k\nonumber\\
& = A_{n+1, n+1} B_{n+1} +  \left[\frac{(n+1)(n+1+\alpha_1-\beta_3)(n+1+\sigma)}{n+\alpha_1}\right]\Omega_n,
\end{align*}
which proves \eqref{A8} and our proof is now complete.
\end{proof}

\section{Rational extension of Newton diagram}
In this section we aim to extend the aforementioned positivity criterion of \cite{CY}
for the generalized hypergeometric functions of type \eqref{AG7}.

To state the criterion precisely, we recall that the Newton diagram associated to a finite set of planar points
$\,\bigl\{\left(\alpha_i,\,\beta_i\right) : i= 1, \cdots, m\bigr\}\,$
refers to the closed convex hull containing
$$\bigcup_{i=1}^m\,\Big\{(x, y)\in\R^2 : x\ge \alpha_i,\,\,y\ge \beta_i\Big\}\,.$$

For each $\,a>0,\,$ we denote by $O_a$ the set of $\,(b, c)\in\R_+^2\,$
defined by
\begin{align}
O_a &= \left\{ a<b<a+\frac 12,\,\, c\ge 3a+\frac 12-b\right\}\nonumber\\
&\qquad \cup
\left\{ a<c<a+\frac 12,\,\, b\ge 3a+\frac 12-c\right\}
\quad\text{if}\quad a\ge\frac 12\,,\label{N1}\\
O_a &= \left\{ a<b<2a,\,\, c\ge 3a+\frac 12-b\right\}\nonumber\\
&\qquad \cup
\left\{ a<c<2a,\,\, b\ge 3a+\frac 12-c\right\}
\quad\text{if}\quad 0<a<\frac 12\,,\label{N2}
\end{align}
which represents two symmetric infinite strips bounded by
$\,b+c= 3a + 1/2\,$ and four half-lines parallel to the coordinate axes.

By combining the methods of Fields and Ismail \cite{FI}, Gasper \cite{Ga1}
and fractional integrals with the squares of Bessel functions as kernels,
two of the present authors established the following criterion.

\smallskip

\begin{theorem}\label{theoremN1}{\rm(Cho and Yun, \cite{CY})}
For $\,a>0,\,b>0,\,c>0,\,$ put
$$\Phi(x)=\F\left[\begin{array}{c}
a\\ b, c\end{array}\biggr| -\frac{\,x^2}{4}\right]\qquad(x>0).$$
Let $P_a$ be the Newton diagram associated to
$\,\Lambda = \left\{\left(a+\frac 12,\,2a\right),\,\left(2a,\,a+\frac 12\right)\right\},\,$
$O_a$ the set defined in \eqref{N1}, \eqref{N2} and $N_a$ the complement of $\,P_a\cup O_a\,$ in $\R_+^2$ so that
the decomposition $\,\R_+^2 = P_a\cup O_a\cup N_a\,$ holds.
\begin{itemize}
\item[\rm(i)] If $\,\Phi\ge 0,\,$ then necessarily
\begin{equation}\label{ND1}
b>a,\,\,c>a,\,\,b+c\ge 3a+\frac 12\,.
\end{equation}
\item[\rm(ii)] If $\,(b, c)\in P_a\,,$ then $\,\Phi\ge 0\,$ and strict positivity holds unless $(b, c)\in\Lambda.$
\item[\rm(iii)] If $\,(b, c)\in N_a\,,$ then $\Phi$ alternates in sign.
\end{itemize}
\end{theorem}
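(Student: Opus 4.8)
The plan is to prove the three parts in the order (i), (ii), (iii), letting the necessary conditions of part~(i) organize the sufficiency and sign-alternation arguments. For part~(i) the natural tool is the behaviour of $\Phi$ as $x\to\infty$. Setting $z=-x^2/4\to-\infty$, a ${}_pF_q$ with $q=p+1$ splits into an algebraically decaying part, whose leading term is a constant multiple of
$$\frac{\Gamma(b)\Gamma(c)}{\Gamma(a)\,\Gamma(b-a)\,\Gamma(c-a)}\,x^{-2a},$$
and an oscillatory part with amplitude of order $x^{\,a+1/2-b-c}$ modulating $\cos(x-\varphi)$. Demanding $\Phi\ge0$ for all large $x$ forces the sign-definite algebraic part to dominate the oscillatory one, which gives $b+c\ge 3a+\tfrac12$; and it forces the leading algebraic coefficient to be nonnegative, which through the sign of $\Gamma(b-a)\Gamma(c-a)$ yields $b>a$ and $c>a$. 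Once the expansion is in hand this part is essentially bookkeeping (alternatively one can read the same inequalities off the Mellin transform of $\Phi$).

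For the sufficiency in part~(ii), I would exploit two structural facts. First, the vertices of $\Lambda$ are perfect squares: at $(b,c)=(2a,a+\tfrac12)$ one has
$$\F\left[\begin{array}{c} a\\ 2a,\ a+\tfrac12\end{array}\biggr|-\frac{x^2}{4}\right]=c_a\,x^{\,1-2a}\left[J_{a-1/2}\!\left(\frac x2\right)\right]^2\ge0\qquad(c_a>0),$$
and the $b\leftrightarrow c$ symmetry gives the same at $(a+\tfrac12,2a)$; these are nonnegative but vanish at Bessel zeros, which is exactly why strict positivity must be excluded on $\Lambda$. Second, the Euler-type fractional integral
$$\F\left[\begin{array}{c} a\\ b,\ c'\end{array}\biggr|z\right]=\frac{\Gamma(c')}{\Gamma(c)\Gamma(c'-c)}\int_0^1 t^{c-1}(1-t)^{c'-c-1}\,\F\left[\begin{array}{c} a\\ b,\ c\end{array}\biggr|zt\right]dt\qquad(c'>c>0)$$
has a nonnegative kernel, so raising either denominator parameter preserves nonnegativity (and upgrades it to strict positivity, the integrand being positive on a set of full measure). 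Since every point of $P_a$ lies coordinatewise above some point of the closed segment $\{b+c=3a+\tfrac12,\ a+\tfrac12\le b\le 2a\}$, it suffices to prove $\Phi\ge0$ along this segment and then raise $b$ and $c$ to fill out all of $P_a$.

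The positivity on the connecting segment is, I expect, the main obstacle. Its endpoints are the Bessel squares above, but an interior point with $b+c=3a+\tfrac12$ is no longer a square and cannot be reached by raising parameters from a single vertex. Here I would follow the Fields--Ismail and Gasper circle of ideas and represent $\Phi$ on the segment as a nonnegatively weighted superposition of squares (equivalently products) of Bessel functions, via a fractional integral whose kernel now couples the two Bessel orders. The hard part is verifying that the representing weight is genuinely nonnegative along the whole segment; this is precisely where Gasper's sums-of-squares method is needed and where the sign of an underlying terminating ${}_4F_3$ (reducible by Saalsch\"utz to a ${}_3F_2$) must be controlled, in the same spirit as Lemma~\ref{lemmaA2}. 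Pinning down this sign, rather than the surrounding soft arguments, is the technical heart of the theorem.

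Finally, for part~(iii) I would return to the asymptotic expansion. Over the bulk of $N_a$ the necessary conditions of part~(i) fail: where $b+c<3a+\tfrac12$ the oscillatory term dominates and produces infinitely many sign changes, while where $b\le a$ or $c\le a$ the leading algebraic coefficient is negative although $\Phi(0)=1>0$, again forcing a change of sign. The delicate subregion is the gap where $b>a$, $c>a$, $b+c\ge 3a+\tfrac12$ yet $(b,c)\notin P_a\cup O_a$; there I would exhibit an $x$ with $\Phi(x)<0$ by showing that the associated sum-of-squares form fails to be positive, i.e.\ that the relevant ${}_4F_3$ evaluation changes sign, and combine this with the positivity near the origin to conclude oscillation. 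The recurring difficulty across (ii) and (iii) is thus the same one: determining the exact sign of a family of terminating hypergeometric sums.
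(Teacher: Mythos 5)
First, a point of reference: the paper does not prove Theorem \ref{theoremN1} at all --- it is imported verbatim from \cite{CY}, with only the one-line indication that it was obtained ``by combining the methods of Fields and Ismail, Gasper and fractional integrals with the squares of Bessel functions as kernels.'' So there is no in-paper proof to match your attempt against; one can only compare your outline with that description and with Remark \ref{remarkSF}. On that score your plan for part (ii) is aligned with the cited method: the identification of the vertices of $\Lambda$ with $\J_{a-1/2}^2$ agrees with \eqref{SF3}, and reducing $P_a$ to the critical segment $b+c=3a+\frac 12$ by Euler-type fractional integration, then representing $\Phi$ on that segment as a nonnegatively weighted superposition of Bessel squares, is exactly the Fields--Ismail/Gasper route. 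Your part (i) via the large-$x$ asymptotics of $\F$ is also the standard argument, though deducing $b>a$, $c>a$ purely from the sign of $\Gamma(b-a)\Gamma(c-a)$ needs more care than you indicate ($\Gamma$ changes sign on the negative intervals, and the pole cases $b-a\in\{0,-1,-2,\dots\}$ must be treated separately); one has to combine it with $b+c\ge 3a+\frac12$ and, in some cases, subleading terms.

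There remain two genuine defects. The first you concede yourself: the entire technical content of part (ii) --- the nonnegativity of the representing weight along the segment, equivalently the sign of the relevant terminating ${}_4F_3$/${}_3F_2$ sums --- is named but not carried out. Since every other step is soft (asymptotics, symmetry, monotone integral operators), without this verification nothing is actually proved; an outline that defers precisely the hard step is not a proof. The second is a misreading of the geometry in part (iii). You posit a ``delicate subregion'' of $N_a$ where $b>a$, $c>a$, $b+c\ge 3a+\frac12$ hold yet $(b,c)\notin P_a\cup O_a$, and propose a separate sum-of-squares failure argument there. That region is empty: for $a\ge\frac12$, say, $P_a=\{b\ge a+\frac12,\,c\ge a+\frac12,\,b+c\ge 3a+\frac12\}$ and the two strips of $O_a$ fill in exactly the remaining points with $a<b<a+\frac12$ or $a<c<a+\frac12$, so $P_a\cup O_a$ coincides with the necessity region of \eqref{ND1} and $N_a$ is precisely its complement. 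Hence part (iii) requires no new mechanism beyond a quantitative version of part (i), namely that where \eqref{ND1} fails the asymptotics force infinitely many sign changes (with the boundary cases $b=a$ or $c=a$, where $\Phi$ degenerates toward an oscillating ${}_0F_1$, handled explicitly); the extra argument you sketch addresses a case that does not occur.
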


\smallskip

\begin{remark}\label{remarkSF}
For the cases of nonnegativity, we follow \cite{CY} to introduce
\begin{align}\label{SF2}
\J_\alpha(x) = {}_0F_1\left(\alpha +1\,; -\frac{\,x^2}{4}\right)\qquad(\alpha>-1).
\end{align}
Owing to the relation of \eqref{SF1},
it is easy to see that $\J_\alpha$ shares positive zeros in common with
Bessel function $J_\alpha$ and its square takes the form
\begin{align}\label{SF3}
\mathbb{J}_\alpha^2\left(x\right) =
\F\left[\begin{array}{c}
\alpha + \frac 12\\ \alpha+1, 2\alpha+1\end{array}\biggr| - x^2\right]
\end{align}
when $\,\alpha>-1/2.\,$ Consequently, if $\,(b, c)\in\Lambda,\,$ then
\begin{equation}\label{SF3}
\Phi(x) = \F\left[\begin{array}{c}
a\\ a+\frac 12, 2a\end{array}\biggr| -\frac{\,x^2}{4}\right] =
\mathbb{J}_{a-\frac 12}^2\left(\frac x2\right),
\end{equation}
which is nonnegative but has infinitely many zeros on $(0, \infty)$.
\end{remark}

\smallskip

Theorem \ref{theoremN1} unifies many of earlier positivity results and we refer to our recent paper \cite{CCY}
in which it is applied to improve the results of Misiewicz and Richards \cite{MR}, Buhmann \cite{Bu} at the same time.

For $\,(b, c)\in O_a,\,$ it is left undetermined whether positivity holds or not.
We now state our main extension theorem which still does not fill out the whole of $O_a$
but covers the upper half of the rational function
$$ c= a+\frac{a}{2(b-a)}\qquad(b>a).$$
We shall use the letter $\Lambda$ below for the same notation as above.

\smallskip

\begin{theorem}\label{theoremN2}
For $\,a>0,\,b>0,\,c>0,\,$ put
$$\Phi(x)=\F\left[\begin{array}{c}
a\\ b, c\end{array}\biggr| -\frac{\,x^2}{4}\right]\qquad(x>0).$$
Let $P_a^*$ be the set of parameter pairs $\,(b, c)\in\R_+^2\,$ defined by
\begin{align*}
P_a^* = \left\{b>a,\,c>a,\,
c\ge\max\Big[ 3a+\frac 12-b,\,\,a+ \frac{a}{2(b-a)}\Big]\right\}.
\end{align*}
If $\,(b, c)\in P_a^*\setminus\Lambda,\,$ then $\Phi$ is strictly positive.
\end{theorem}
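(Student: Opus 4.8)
The plan is to run Gasper's sum-of-squares method on $\Phi$ and then feed the resulting coefficients into Lemma \ref{lemmaA2}. Following the scheme announced in the introduction (and the fractional-integral realization of \cite{CY}), I would first produce an expansion $\Phi(x)=\sum_{n\ge 0}\Theta_n\,\Psi_n(x)$ in which every building block $\Psi_n\ge 0$ on $(0,\infty)$ is a square of a Bessel-type function and every coefficient $\Theta_n$ is a terminating ${}_4F_3$ series of exactly the form \eqref{AG8} treated in Lemma \ref{lemmaA2}. Concretely, I would expand $\Phi$ against the squares $\J_{a-1/2+n}^2(x/2)$ in the sense of \eqref{SF3}, so that the connection coefficients are terminating and balanced; the normalization $(b,c)\in\Lambda$ should correspond to the degenerate expansion $\Phi=\J_{a-1/2}^2(x/2)$ recorded in Remark \ref{remarkSF}. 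Granting such a representation, the positivity of $\Phi$ is reduced to the signs of the $\Theta_n$, and the entire theorem becomes a matter of checking the hypotheses \eqref{A} for the induced parameters $\alpha_j,\beta_j$.

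The second step is to read off $\alpha_1,\alpha_2,\alpha_3,\beta_1,\beta_2,\beta_3$ as explicit functions of $a,b,c$ and to verify (A1)--(A4). The Saalsch\"utzian identity (A1) should fall out automatically from the construction, since Gasper's expansion manufactures balanced series. I expect the ordering and positivity conditions (A2)--(A3) to translate into the defining inequalities $b>a$, $c>a$, and $b+c\ge 3a+\frac12$, which are already forced by the necessary conditions \eqref{ND1} and hold throughout $P_a^*$; once the parameter dictionary is fixed, confirming them is pure bookkeeping.

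The decisive point is condition (A4). Using the factorization \eqref{A6} of $\Omega_1$, the sign of the first coefficient is governed by $\beta_1\beta_2\beta_3-(1+\alpha_1)\alpha_2\alpha_3$, and I would check that under the chosen matching this quantity is a positive multiple of $2(b-a)(c-a)-a$. Then (A4) is equivalent to $2(b-a)(c-a)\ge a$, i.e.\ to $c\ge a+\frac{a}{2(b-a)}$, which is precisely the new rational boundary in the definition of $P_a^*$. A reassuring consistency check is that the curve $2(b-a)(c-a)=a$ passes through both points of $\Lambda$, namely at $b-a=\frac12$ and $b-a=a$, and lies below the line $b+c=3a+\frac12$ between them; this matches exactly the geometry of $P_a^*$ and the excluded set $\Lambda$.

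With (A1)--(A4) in force, Lemma \ref{lemmaA2} yields $\Theta_1\ge 0$ and $\Theta_n>0$ for all $n\ge 2$, whence $\Phi=\sum_{n}\Theta_n\Psi_n\ge 0$. For the strict positivity claimed off $\Lambda$, I would argue pointwise: fix $x_0>0$; since $\J_\nu(x_0/2)$ has only isolated zeros as $\nu$ varies, there is some $n\ge 2$ with $\Psi_n(x_0)>0$, and the corresponding strictly positive coefficient $\Theta_n$ forces $\Phi(x_0)>0$. The excluded case $(b,c)\in\Lambda$ is exactly where the expansion collapses to the single square $\J_{a-1/2}^2(x/2)$ and the higher coefficients vanish identically, so that the argument (correctly) breaks down there. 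The main obstacle I anticipate is Step~1 itself: producing the sum-of-squares expansion with coefficients in the precise ${}_4F_3$ shape of Lemma \ref{lemmaA2}, and establishing the parameter dictionary so cleanly that (A4) collapses to the rational curve. The subsequent verification of (A2)--(A4) is essentially the algebraic identity behind \eqref{A6} together with elementary inequalities.
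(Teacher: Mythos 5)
Your overall strategy is exactly the paper's: apply Gasper's sums of squares formula to write $\Phi$ as a nonnegative combination of squares $J_{\nu+n}^2(x/2)$ whose coefficients are terminating Saalsch\"utzian ${}_4F_3$'s, feed those coefficients into Lemma \ref{lemmaA2}, and observe that (A4) is precisely the rational curve $2(b-a)(c-a)\ge a$, i.e.\ $c\ge a+\frac{a}{2(b-a)}$. Your computation of (A4) and your consistency check that the curve passes through both points of $\Lambda$ and dips below $b+c=3a+\frac12$ between them are correct and match \eqref{W5} and the discussion of $L$ in the paper.

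The genuine gap is the one you yourself flagged: the parameter dictionary. You propose to expand against $\J_{a-1/2+n}^2(x/2)$, i.e.\ to take $\nu=a-\tfrac12$, and you assert that balancedness ``should fall out automatically.'' It does not. Gasper's formula \eqref{W2} holds for an essentially \emph{arbitrary} $\nu$, with coefficients $C(n,\nu)={}_4F_{3}[-n,\,n+2\nu,\,\nu+1,\,a;\ \nu+\tfrac12,\,b,\,c]$ as in \eqref{W3}; the Saalsch\"utzian condition (A1) for these parameters reads $1+2\nu+(\nu+1)+a=(\nu+\tfrac12)+b+c$, which forces the specific choice $\nu=\tfrac12(b+c-a-\tfrac32)$ of \eqref{W4}. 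Your choice $\nu=a-\tfrac12$ satisfies this only on the line $b+c=3a+\tfrac12$, which is already the boundary of the Newton diagram; on the genuinely new part of $P_a^*$ the coefficients would not be balanced and Lemma \ref{lemmaA2} (whose proof rests on Whipple's transformation for balanced series) would not apply. The whole point of the argument is to tune the free parameter $\nu$ to manufacture (A1), accepting that the expansion order then depends on $(b,c)$. A secondary inaccuracy: (A2)--(A3) do not reduce to $b>a$, $c>a$, $b+c\ge 3a+\tfrac12$ on all of $P_a^*$; for instance (A2) becomes $c>b-a+\tfrac12$, which fails when $b$ is large. The paper handles this by first invoking Theorem \ref{theoremN1} on the Newton diagram $P_a$, using the $b\leftrightarrow c$ symmetry to assume $b\le c$, and thereby reducing to $a<b<a+\tfrac12$ (resp.\ $a<b<2a$), on which range \eqref{W5} does collapse to the single condition \eqref{W1}. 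With those two repairs, your outline becomes the paper's proof.
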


\begin{remark}
For the sake of convenience, we illustrate Theorems \ref{theoremN1}, \ref{theoremN2}
with Figures \ref{Fig2}, \ref{Fig3} for the case $\,a>1/2,\,a=1/2,\,$
separately (the case $\,0<a<1/2\,$ is similar to the case $\,a>1/2$). In each figure,
the red-colored part indicates the improved positivity region $P_a^*$ and
the grey-colored part indicates the region where positivity breaks down. The blank or
white-colored parts indicate the missing region.
\end{remark}

\begin{figure}[!h]
 \centering
 \includegraphics[width=280pt, height= 200pt]{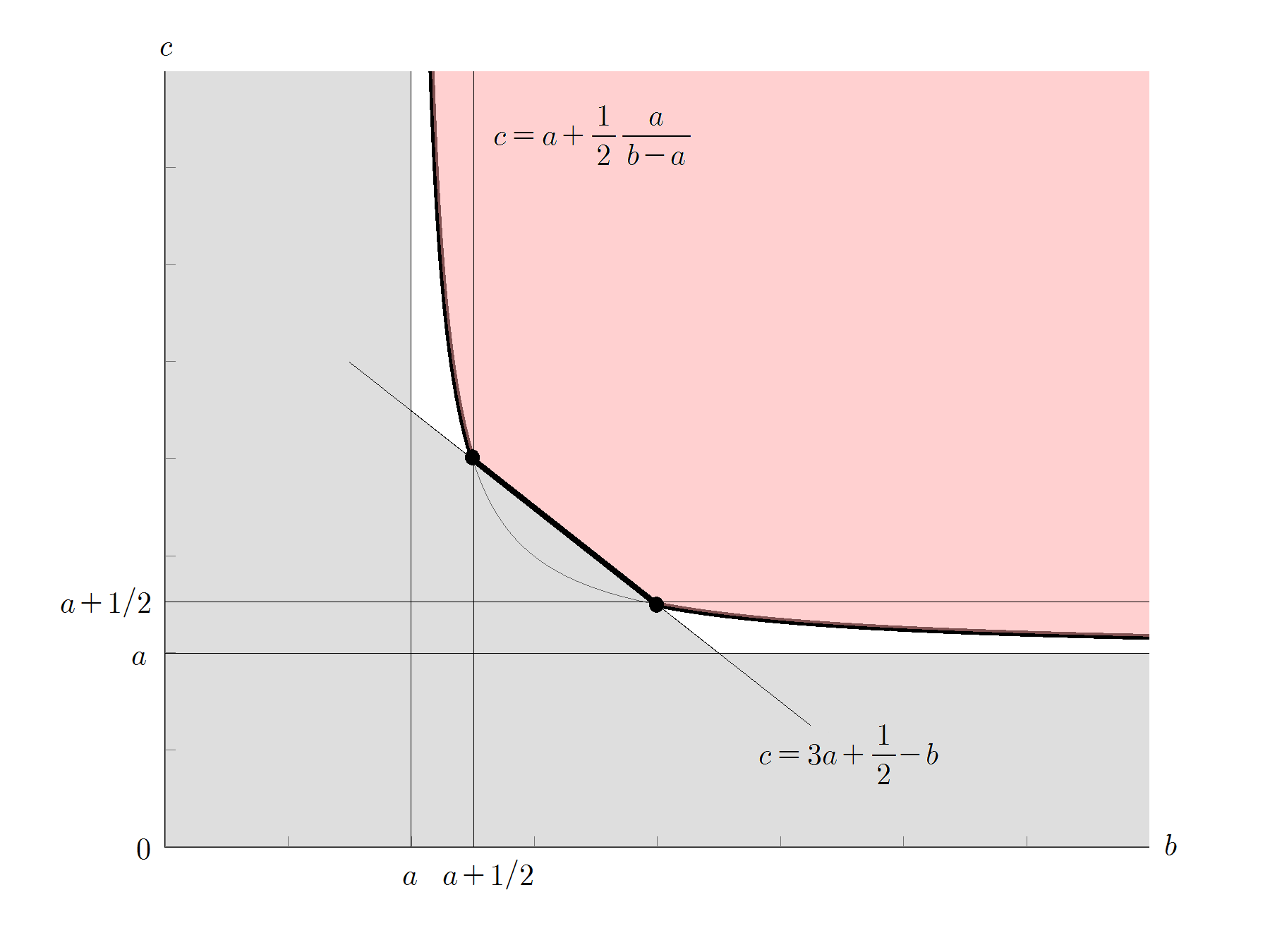}
 \caption{The improved positivity region $P_a^*$ in the case $\,a>\frac 12\,$ which includes
 the Newton diagram associated to $\,\Lambda=\{(a+1/2, 2a), \,(2a, a+1/2)\}\,$.}
\label{Fig2}
\end{figure}

\bigskip

\begin{figure}[!h]
 \centering
 \includegraphics[width=280pt, height=200pt]{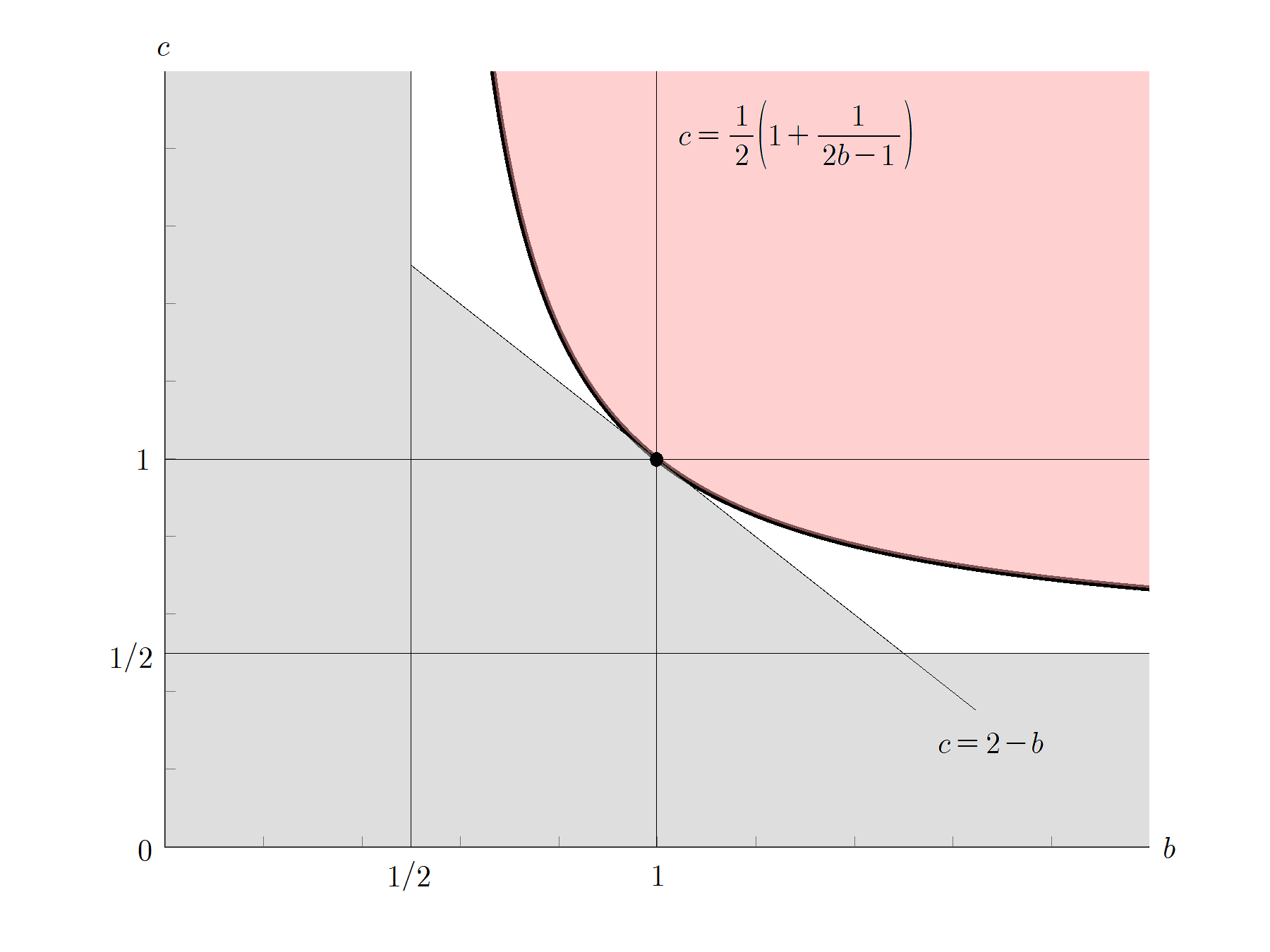}
 \caption{The improved positivity region $P_{\frac 12}^*$ which includes
the Newton diagram associated to $\,\Lambda=\{(1, 1)\}.$}
\label{Fig3}
\end{figure}

\paragraph{Proof of Theorem \ref{theoremN2}.}
In view of the difference
$$ a+ \frac{a}{2(b-a)} - \left[ 3a+\frac 12-b\right] = \frac{\left(b-a-\frac 12\right)(b-2a)}{b-a}\,,$$
it is graphically obvious that the rational function $\,c= a+ a/2(b-a)\,$ lies below the line
$\,c= 3a+1/2-b\,$ only for $\,b\in L,\,$ where $L$ denotes
$$L=\left\{(1-t)(a+1/2) + t(2a) : 0\le t\le 1\right\}.$$

As it is already shown in Theorem \ref{theoremN1} that $\Phi$ is strictly positive for $\,(b, c)\in P_a\setminus\Lambda,\,$ it remains to
prove the positivity of $\Phi$ in the case $\,(b, c)\in P_a^*\,$ with $b$ lying outside
the closed interval $L$. By symmetry in $b, c,$ we may assume $\,b\le c\,$ and hence it suffices to
deal with the case
\begin{equation}\label{W1}
c\ge a+ \frac{a}{2(b-a)},
\end{equation}
where $\,a<b<a+1/2\,$ when $\,a\ge 1/2\,$ or $\,a<b<2a\,$ when $\,0<a<1/2.$

We apply Gasper's
{\it sums of squares formula} (\cite{Ga1}, (3.1)) to write
\begin{align}\label{W2}
\Phi(x) &= \Gamma^2(\nu+1)\left( \frac{x}{4}\right)^{-2\nu}\biggl\{J_{\nu}^2\left( \frac{x}{2}\right) +\nonumber\\
&\qquad \sum_{n=1}^{\infty} C(n, \nu)\frac{2n+2\nu}{n+2\nu}\frac{(2\nu+1)_n}{n!}
J_{\nu+n}^2\left( \frac{x}{2}\right)\biggr\}
\end{align}
in which $C(n, \nu)$ denotes the terminating series defined by
\begin{equation}\label{W3}
C(n, \nu)= {}_4F_{3} \left[\begin{array}{c} -n,\,n+2\nu,\,\nu+ 1,\,a\\
\nu+\frac 12,\,b,\,c\end{array}\right]
\end{equation}
and $\nu$ can be arbitrary as long as $2\nu$ is not a negative integer.

Due to the interlacing property on the zeros of Bessel functions $\,J_\nu,\,J_{\nu+1}\,$
(see Watson \cite{Wa}), the positivity of $\Phi$ would follow instantly from formula \eqref{W2} if
$\,C(n, \nu)>0\,$ for all $n$, for instance, and $\,\nu>-1/2.$

To investigate the sign of $C(n, \nu)$, we apply Lemma \ref{lemmaA2} with
$$\alpha_1=2\nu, \,\,\alpha_2=\nu+1,\,\,\alpha_3=a,\,\,\beta_1=\nu+\frac 12,\,\,\beta_2=b,\,\,\beta_3=c.$$
The Saalsch\"utzian condition (A1) of \eqref{A} is equivalent to the choice
\begin{equation}\label{W4}
\nu = \frac 12\left(b+ c-a-\frac 32\right).
\end{equation}
It is elementary to translate conditions (A2), (A3), (A4) of \eqref{A} into
\begin{equation}\label{W5}
\left\{\begin{aligned} &{\,\, c > b-a + \frac 12,\quad b\ge a-\frac 12\,,}\\
&{\,\,c >3a+\frac 12-b,\quad b>a,}\\
&{\,\,c\ge a+ \frac{a}{2(b-a)}}.
\end{aligned}\right.
\end{equation}

On inspecting the region determined by \eqref{W5} in the $(b, c)$-plane, it is immediate to
find that \eqref{W5} amounts to \eqref{W1} subject to the restriction
$\,a<b<a+1/2\,$ when $\,a\ge 1/2\,$ or $\,a<b<2a\,$ when $\,0<a<1/2.$

By Lemma \ref{lemmaA2}, we may conclude $\,C(1, \nu)\ge 0\,$ and $\,C(n, \nu)>0\,$ for all $\,n\ge 2\,$
with $\nu$ chosen according to \eqref{W4} and our proof is now complete.\qed

\section{Askey-Szeg\"o problem}
Returning to problem \eqref{AG1}, an application of the above positivity criteria
in an obvious way yields what we aimed to establish.

\smallskip

\begin{theorem}\label{theoremN3}
For $\,\alpha>-1,\,\beta<\alpha+1,\,$ put
$$\psi(x) = \int_0^x t^{-\beta} J_\alpha(t) dt\qquad(x>0).$$
\begin{itemize}
\item[\rm(i)] If $\,\psi\ge 0,\,$ then necessarily
\begin{equation*}
-\alpha-1<\beta<\alpha+1,\,\,\beta\ge -\frac 12\,.
\end{equation*}
\item[\rm(ii)] Let $\mathcal{P}^*$ be the set of $\,(\alpha, \beta)\in\R^2\,$ defined by
\begin{align*}
\mathcal{P}^*=\left\{\alpha>-1, \,\,\max\Big[-\frac 12,\,-\frac 13 (\alpha+1)\Big]\le\beta<\alpha+1\right\}
\end{align*}
Then $\,\psi\ge 0\,$ for each $\,(\alpha, \beta)\in \mathcal{P}^*\,$ and strict positivity holds
unless it coincides with $\,(1/2, -1/2).$
\end{itemize}
\end{theorem}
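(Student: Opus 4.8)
The plan is to transfer everything to the positivity criteria of Section 4 through the representation \eqref{AG6}. Since $\alpha>-1$ and $\beta<\alpha+1$, the prefactor $x^{\alpha-\beta+1}/[2^\alpha(\alpha-\beta+1)\Gamma(\alpha+1)]$ in \eqref{AG6} is strictly positive for every $x>0$, so the sign of $\psi(x)$ coincides with that of
$$\Phi(x)=\F\left[\begin{array}{c} a\\ b,\,c\end{array}\biggr|-\frac{\,x^2}{4}\right],\qquad a=\frac{\alpha-\beta+1}{2},\quad b=\alpha+1,\quad c=\frac{\alpha-\beta+3}{2}.$$
Here $a>0$ corresponds to $\beta<\alpha+1$ and $b>0$ to $\alpha>-1$, while the decisive structural feature is that $c=a+1$, i.e.\ $c-a=1$. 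Thus $\psi\ge 0$ (resp.\ $\psi>0$) for all $x>0$ exactly when $\Phi\ge 0$ (resp.\ $\Phi>0$), and I would run the entire argument on $\Phi$.

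For part (i) I would invoke Theorem \ref{theoremN1}(i): if $\Phi\ge 0$, then the necessary conditions \eqref{ND1}, namely $b>a$, $c>a$ and $b+c\ge 3a+\frac12$, must hold. Under the substitution above these translate by elementary algebra: $b-a=\frac12(\alpha+\beta+1)$ gives $b>a$ if and only if $\beta>-(\alpha+1)$; the condition $c>a$ is vacuous since $c=a+1$; and $b+c-(3a+\frac12)=\beta+\frac12$ gives $b+c\ge 3a+\frac12$ if and only if $\beta\ge-\frac12$. Combined with the standing hypothesis $\beta<\alpha+1$, these yield precisely $-\alpha-1<\beta<\alpha+1$ together with $\beta\ge-\frac12$.

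For part (ii) the idea is to show that $(\alpha,\beta)\in\mathcal{P}^*$ is equivalent to $(b,c)\in P_a^*$ and then apply Theorem \ref{theoremN2}. The conditions $b>a$, $c>a$ hold as above, and $c\ge 3a+\frac12-b$ is again $\beta\ge-\frac12$. The only genuinely new condition is the rational one, $c\ge a+\frac{a}{2(b-a)}$; because $c-a=1$ this reduces to $2(b-a)\ge a$, and with $b-a=\frac12(\alpha+\beta+1)$ and $a=\frac12(\alpha-\beta+1)$ it becomes $\alpha+3\beta+1\ge 0$, i.e.\ $\beta\ge-\frac13(\alpha+1)$. Hence membership $(b,c)\in P_a^*$ amounts exactly to $\beta\ge\max[-\frac12,\,-\frac13(\alpha+1)]$ (the accompanying $\beta>-(\alpha+1)$ being automatic, since $-\frac13(\alpha+1)>-(\alpha+1)$ for $\alpha>-1$), which is the defining description of $\mathcal{P}^*$. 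By Theorem \ref{theoremN2}, $\Phi$ is strictly positive whenever $(b,c)\in P_a^*\setminus\Lambda$, giving $\psi>0$ throughout $\mathcal{P}^*$ off the preimage of $\Lambda$.

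The only remaining point — which I regard as the place that needs care rather than routine computation — is to locate the preimage of $\Lambda=\{(a+\frac12,2a),(2a,a+\frac12)\}$ under the constraint $c=a+1$. The branch $(b,c)=(2a,a+\frac12)$ forces $a+1=a+\frac12$ and is therefore impossible, contributing nothing. The branch $(b,c)=(a+\frac12,2a)$ forces $2a=a+1$, i.e.\ $a=1$ and $b=\frac32$; tracing back through $a=\frac{\alpha-\beta+1}{2}=1$ and $b=\alpha+1=\frac32$ pins down the single point $(\alpha,\beta)=(1/2,-1/2)$. There Remark \ref{remarkSF} identifies $\Phi(x)=\J_{a-1/2}^2(x/2)=\J_{1/2}^2(x/2)$, which is nonnegative yet vanishes at infinitely many $x$, so $\psi\ge 0$ holds at this point without strict positivity. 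This accounts for the exceptional pair $(1/2,-1/2)$ and completes the plan.
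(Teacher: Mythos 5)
Your proposal is correct and follows essentially the same route as the paper: reduce via \eqref{AG6} to the $\F$ function with $a=\frac{\alpha-\beta+1}{2}$, $b=\alpha+1$, $c=a+1$, translate the necessary conditions of Theorem \ref{theoremN1}(i) for part (i), translate membership in $P_a^*$ (where $c=a+1$ turns the rational condition into $\beta\ge-\frac13(\alpha+1)$) and apply Theorem \ref{theoremN2} for part (ii), and identify the sole $\Lambda$-preimage $(1/2,-1/2)$ where $\Phi=\bigl[\sin(x/2)/(x/2)\bigr]^2$. All the algebraic translations check out, so nothing further is needed.
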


\begin{figure}[!h]
 \centering
 \includegraphics[width=300pt, height=220pt]{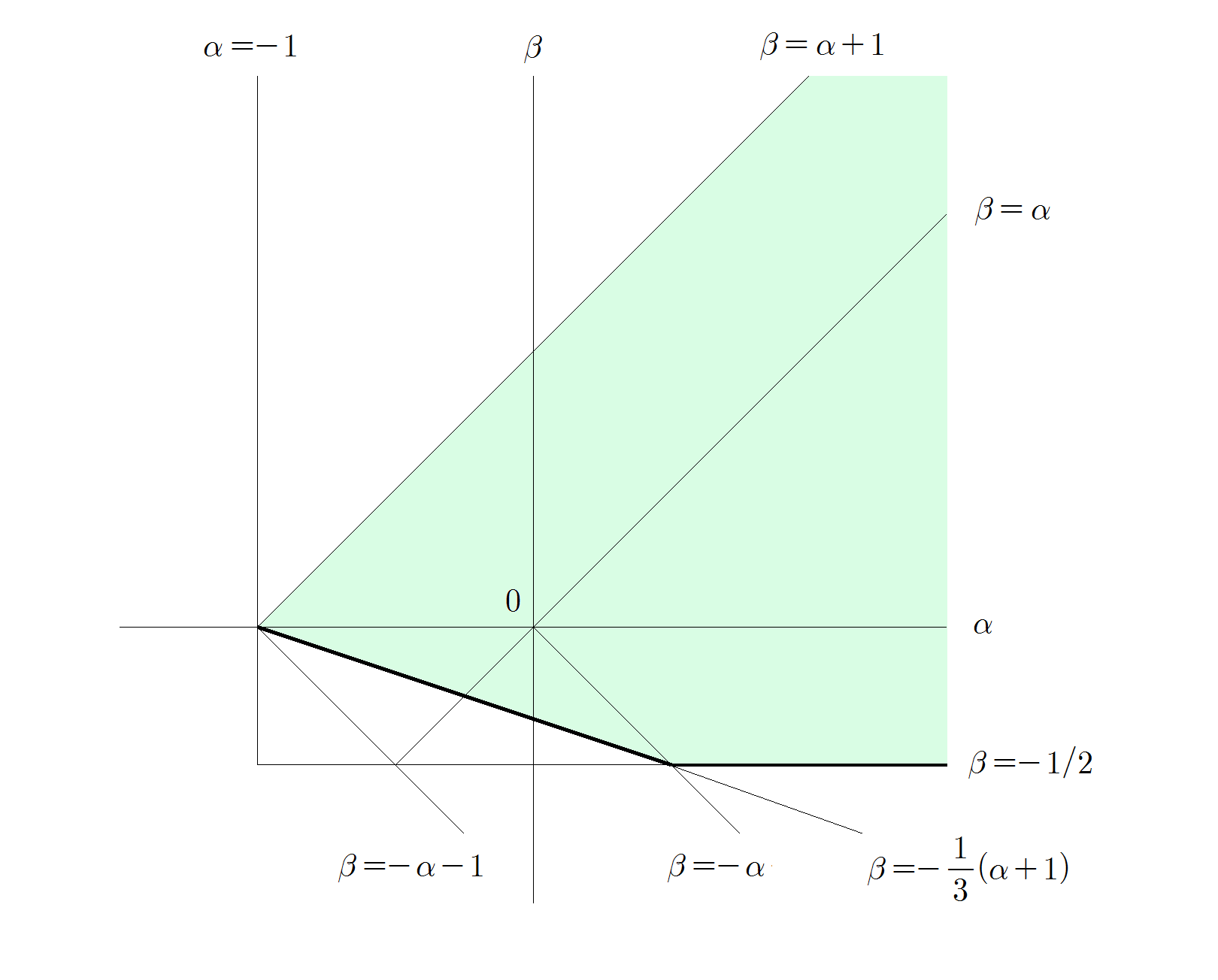}
 \caption{The improved positivity region for problem \eqref{AG1} in which the line
 $\,\beta=\alpha\,$ corresponds to Szeg\"o's problem \eqref{AG4}.}
\label{Fig4}
\end{figure}

\begin{remark} In Figure \ref{Fig4}, the green-colored part represents
$\mathcal{P}^*$. As it is evident pictorially on comparing with Figure \ref{Fig1},
Theorem \ref{theoremN3} improves Theorem A by adding
the triangle with boundary lines
$$\beta= 0,\,\,\beta=-\alpha,\,\,\beta=-\frac 13(\alpha+1)$$
as a new positivity region and by narrowing down the necessity region.
\end{remark}

\paragraph{Proof of Theorem \ref{theoremN3}.}
In view of \eqref{AG6}, it suffices to deal with
\begin{equation}\label{IAG1}
\Psi(x) = \F\left[\begin{array}{c} \frac{\alpha-\beta+1}{2}\\ \alpha+1,
\frac{\alpha-\beta+1}{2} +1\end{array} \biggl| -\frac{\,x^2}{4}\right]\qquad(x>0).
\end{equation}

Under the assumption $\,\alpha>-1,\,\beta<\alpha+1,\,$ each parameter of $\Psi$ is positive.
If $\,\Psi\ge 0,\,$ then it follows from necessary condition \eqref{ND1} that
\begin{align*}
\alpha+1 &> \frac{\alpha-\beta+1}{2},\\
\frac{\alpha-\beta+1}{2} +1 &\ge 3\left( \frac{\alpha-\beta+1}{2}\right)
+\frac 12 -(\alpha+1),
\end{align*}
which reduces to the stated necessary condition of part (i).

To prove part (ii), we apply Theorem \ref{theoremN2} with
$$ a=  \frac{\alpha-\beta+1}{2}\,,\,\,b=\alpha+1,\,\,c=  \frac{\alpha-\beta+1}{2} +1.$$
Inspecting the conditions $\,c\ge 3a+1/2-b,\,c\ge a+a/2(b-a)\,$ for $\,b>a\,$ in terms of $\alpha, \beta$ separately, it is
elementary to find the condition
$$c\ge\max\Big[ 3a+\frac 12-b,\,\,a+ \frac{a}{2(b-a)}\Big],\,\,b>a,$$
is equivalent to
\begin{equation}\label{P1}
\beta\ge \max\Big[-\frac 12,\,-\frac 13 (\alpha+1)\Big],\,\,\beta>-\alpha-1.
\end{equation}

Combining \eqref{P1} with the necessary condition of part (i), we deduce $\,\Psi\ge 0\,$
for each $\,(\alpha, \beta)\in\mathcal{P}^*.$ Regarding strict positivity, we note that
the nonnegativity condition required by
$$(b, c)\in\Lambda = \left\{\Big(a+\frac 12, 2a\Big),\,\,\Big(2a, a+\frac 12\Big)\right\}$$
reduces to the single case $\,(\alpha, \beta)= (1/2, -1/2).\,$ Indeed,
\begin{equation}
\Psi(x) = \F\left(1\,; \frac 32, \,2\,; -\frac{\,x^2}{4}\right)
=\left[\frac{\sin(x/2)}{x/2}\right]^2
\end{equation}
in this case, which is nonnegative but has infinitely many positive zeros.

By Theorem \ref{theoremN2}, we conclude
$\Psi$ is strictly positive for each $\,(\alpha, \beta)\in\mathcal{P}^*\,$ unless it coincides with
$\,(1/2, -1/2)\,$ and our proof is complete.\qed

\medskip

As an immediate consequence of Theorem \ref{theoremN3}, we obtain the following
upper and lower bounds for $\beta(\alpha)$ and $\bar{\alpha}.$

\smallskip

\begin{corollary}\label{corollaryN1} Under the same setting as in Theorem B, we have
\begin{align*}
{\rm(i)}\quad &\,\max\left(-\alpha-1, \,-\frac 12\right)<\beta(\alpha)\le -\frac 13 (\alpha+1),\\
{\rm(ii)}\quad &\,\,\,\lim_{\alpha\to -1 +}\beta(\alpha) = 0,\quad \beta\left(\frac 12\right) = -\frac 12\,,\\
{\rm(iii)}\quad & \,\,\,-\frac 12<\bar{\alpha}\le -\frac 14\,.
\end{align*}
\end{corollary}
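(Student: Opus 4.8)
The plan is to derive all three parts directly from Theorem \ref{theoremN3}, reading its part (ii) as a source of \emph{upper} bounds and its part (i) as a source of \emph{lower} bounds for $\beta(\alpha)$. The bridge is Theorem B(i), which characterizes $\beta(\alpha)$ as the exact threshold: for $-1<\alpha\le 1/2$ one has $\psi\ge 0$ on $(0,\infty)$ if and only if $\beta\ge\beta(\alpha)$. Consequently, any specific $\beta$ for which we can certify $\psi\ge 0$ forces $\beta\ge\beta(\alpha)$, while any necessary condition for $\psi\ge 0$ must hold at $\beta=\beta(\alpha)$. With this dictionary in place, the whole corollary reduces to substituting distinguished parameter values and solving linear inequalities.

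For the upper bound in (i), I would take $\beta=-\frac13(\alpha+1)$ and verify $(\alpha,\beta)\in\mathcal{P}^*$ for $-1<\alpha\le 1/2$: since $\alpha+1\le 3/2$ gives $-\frac13(\alpha+1)\ge -\frac12$, we get $\max[-\frac12,-\frac13(\alpha+1)]=-\frac13(\alpha+1)=\beta$, and $\beta<\alpha+1$ is clear. Theorem \ref{theoremN3}(ii) then yields $\psi\ge 0$, hence $\beta(\alpha)\le -\frac13(\alpha+1)$. For the lower bound, I would apply the necessity statement of Theorem \ref{theoremN3}(i) at $\beta=\beta(\alpha)$ (where $\psi\ge 0$ holds by the threshold property), obtaining $\beta(\alpha)>-\alpha-1$; combined with the localization in Theorem B of $\beta(\alpha)$ inside the open interval $(-\frac12,\alpha+1)$, which gives $\beta(\alpha)>-\frac12$ for $-1<\alpha<1/2$, this produces $\beta(\alpha)>\max(-\alpha-1,-\frac12)$.

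Part (ii) follows by squeezing the bounds of (i). For $-1<\alpha<-1/2$ one has $\max(-\alpha-1,-\frac12)=-\alpha-1$, so $-(\alpha+1)<\beta(\alpha)\le -\frac13(\alpha+1)$, and both outer terms tend to $0$ as $\alpha\to -1^+$, whence $\beta(\alpha)\to 0$. The value $\beta(1/2)=-\frac12$ comes from the same two bounds evaluated at $\alpha=1/2$: the upper bound gives $\beta(1/2)\le-\frac13\cdot\frac32=-\frac12$, while the necessity condition of Theorem \ref{theoremN3}(i) gives $\beta(1/2)\ge-\frac12$, forcing equality. This is precisely the degenerate endpoint at which the strict lower inequality of (i) collapses to an equality, so part (i) is to be read strictly for $-1<\alpha<1/2$ with $\alpha=1/2$ recorded separately in (ii).

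Finally, part (iii) is obtained by inserting the defining relation $\beta(\bar\alpha)=\bar\alpha$ into the bounds of (i): the upper bound gives $\bar\alpha\le-\frac13(\bar\alpha+1)$, i.e.\ $4\bar\alpha\le -1$ and so $\bar\alpha\le-\frac14$, while the lower bound gives $\bar\alpha>\max(-\bar\alpha-1,-\frac12)\ge-\frac12$; together these are the claimed estimates. I expect no serious obstacle here, since all the analytic content (both the positivity on $\mathcal{P}^*$ and the necessity constraint) is already delivered by Theorem \ref{theoremN3}; the only delicate point is the careful treatment of the endpoint $\alpha=1/2$, where the strict and nonstrict bounds meet.
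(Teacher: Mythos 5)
Your proposal is correct and follows essentially the same route as the paper, which simply observes that the corollary is ``evident by Theorem B and Theorem \ref{theoremN3}'' and leaves exactly the substitutions you carry out (positivity on $\mathcal{P}^*$ at $\beta=-\tfrac13(\alpha+1)$ for the upper bound, the necessity condition of Theorem \ref{theoremN3}(i) together with Theorem B for the lower bound, and $\beta(\bar\alpha)=\bar\alpha$ for part (iii)) to the reader. The only divergence is that the paper verifies $\beta(1/2)=-\tfrac12$ directly from $J_{1/2}(t)=\sqrt{2t/\pi}\,\sin t/t$ and $j_{1/2,2}=2\pi$, while you obtain it by squeezing the two bounds at $\alpha=1/2$; both work, and your reading of the endpoint $\alpha=1/2$ as the degenerate case of the strict lower inequality in (i) is the right one.
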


\smallskip

\begin{remark} While the results are evident by Theorem B and Theorem \ref{theoremN3},
that $\,\beta(1/2)= -1/2\,$ can be verified in a simple way. Indeed, the formula
$$J_{\frac 12}(t)= \sqrt{\frac {2t}{\pi}}\, \frac{\sin t}{t}$$
(see Luke \cite{L}, Watson \cite{Wa}) implies that $\,j_{\frac 12,\,2} = 2\pi\,$ and
$$\int_0^{2\pi} \sqrt t J_{\frac 12}(t) dt = \sqrt{\frac {2}{\pi}}\,\int_0^{2\pi} \sin t\, dt =0,$$
whence the desired value follows instantly by the uniqueness of $\beta(\alpha)$.
\end{remark}

\smallskip

Corollary \ref{corollaryN1} indicates that $\,\beta=\beta(\alpha),\,-1<\alpha\le 1/2,\,$ is a smooth
curve joining $\,(-1, 0),\,(1/2, -1/2)\,$ which lies in the triangle
determined by
$$\beta=-\alpha-1,\quad\beta=-1/2,\quad \beta=-(\alpha+1)/3.$$

In \cite{AS}, Askey and Steinig gave a list of numerical approximations for $\beta(\alpha)$.
To get an insight into how accurate or informative the above upper bound would be, we compare it with
their list as follows.

\bigskip

\centerline{
\begin{tabular}{c|c|c}
  $\quad\alpha\quad$ & $\qquad\beta(\alpha)\qquad$ &$\qquad-\dfrac 13 (\alpha+1)\qquad$\\\hline
  $ -0.5 $ & $-0.1915562$ &$-0.1666667 $\\
$-0.4$ & $-0.2259427$ & $-0.2000000$\\
$-0.3$ & $-0.2593436$ & $-0.2333333$\\
$-0.2$ & $-0.2918541$ & $-0.2666667$\\
$-0.1$ & $-0.3235531$ & $-0.3000000$\\
$\,\,\,0$ & $-0.3545096$& $-0.3333333$\\
$\,\,\,\,0.1$ & $-0.3847832$ & $-0.3666667$\\
$\,\,\,\,0.2$ & $-0.4144258$ & $-0.4000000$\\
$\,\,\,\,0.3$ & $-0.4434834$ & $-0.4333333$\\
$\,\,\,\,0.4$ & $-0.4719960$ & $-0.4666667$
\end{tabular}}

\bigskip

Regarding the approximated values as true ones, these comparisons show that $\beta(\alpha)$ lies
within distance $0.026$ from $\,-(\alpha+1)/3\,$ and the error increases
up to certain point near $\,\alpha = -0.3\,$ and then decreases to zero.

On the other hand, we also point out that G. Szeg\"o \cite{Fe} approximated $\,\bar{\alpha}\approx -0.2693885,\,$
whereas our upper bound of $\bar{\alpha}$ is $\,-0. 25.$

\section{Gasper's extensions}
As a generalization of \eqref{AG1}, we consider the problem of determining
parameters $\,\alpha, \beta, \gamma \,$ for the inequality
\begin{equation}\label{GE1}
\int_0^x (x^2- t^2)^\gamma t^{-\beta}J_\alpha(t) dt \ge 0\qquad(x>0),
\end{equation}
which reduces to problem \eqref{AG1} in the special case $\,\gamma =0.$

By integrating termwise, it is plain to evaluate
\begin{align}\label{GE2}
\int_0^x (x^2- t^2)^\gamma t^{-\beta}J_\alpha(t) dt &=
\frac{B\left(\gamma+1, \frac{\alpha-\beta+1}{2}\right)}
{2^{\alpha+1}\Gamma(\alpha+1)}\,x^{\alpha-\beta+2\gamma+1}
\nonumber\\
&\quad\times\, \F\left[\begin{array}{c} \frac{\alpha-\beta +1}{2}\\ \alpha+1,
\frac{\alpha-\beta+1}{2} +\gamma+1\end{array} \biggl| -\frac{\,x^2}{4}\right]
\end{align}
subject to the condition $\,\alpha>-1,\,\gamma>-1,\,\beta<\alpha+1,\,$
where $B$ denotes the usual Euler's beta function. In analogy with \eqref{AG1},
hence, problem \eqref{GE1} is equivalent to the positivity question on the $\F$
generalized hypergeometric function defined on the right side of \eqref{GE2}.

In \cite{Ga1}, Gasper employed the sums of squares method and
an interpolation argument involving fractional integrals to prove that \eqref{GE1} holds with
strict positivity for each $\,(\alpha, \beta)\in\mathcal{S}_\gamma\setminus\left\{ (\gamma+1/2, \,-\gamma-1/2)\right\},\,$
where
\begin{equation}
\mathcal{S}_\gamma = \left\{\alpha\ge \gamma+\frac 12,\,\, \alpha-2\gamma-1\le\beta<\alpha+1\right\}
\end{equation}
in the case $\,-1<\gamma\le -1/2\,$ and
\begin{align}
\mathcal{S}_\gamma &=
\bigl\{\alpha>-1,\, \,0\le\beta<\alpha+1\bigr\}\nonumber\\
&\qquad\cup\,\left\{\alpha\ge\gamma+\frac 12,\,\,-\Big(\gamma+\frac 12\Big)\le\beta\le 0\right\}
\end{align}
in the case $\,\gamma>-1/2\,$ (see Figures \ref{Fig5}, \ref{Fig6} below).

Our purpose here is to improve Gasper's result as follows.

\smallskip

\begin{theorem}\label{theoremN4}
Let $\,\alpha>-1,\,\gamma>-1,\,\beta<\alpha+1.$
\begin{itemize}
\item[\rm(i)] If \eqref{GE1} holds, then necessarily
$$ \beta\ge -\Big(\gamma+\frac 12\Big),\,\,-\alpha-1<\beta<\alpha+1.$$
\item[\rm(ii)] For each $\,\gamma>-1,\,$ let $\,\mathcal{S}_\gamma^*\,$ be the set of
$\,(\alpha, \beta)\in\R^2\,$ defined by
\begin{equation*}
\mathcal{S}_\gamma^* = \left\{\,\alpha>-1,\,\,
\max\left[-\Big(\gamma+\frac 12\Big),\,\,
-\,\frac{2\gamma+1}{2\gamma+3} (\alpha+1)\right] \le\beta<\alpha+1\,\right\}.
\end{equation*}
If $\,(\alpha, \beta)\in\mathcal{S}_\gamma^*,\,$ then \eqref{GE1} holds with strict positivity
unless
$$\alpha=\gamma+\frac 12, \,\beta= -\Big(\gamma+\frac 12\Big)
\quad\text{or}\quad \gamma=-\frac 12\,,\,\,\beta=0.$$
\end{itemize}
\end{theorem}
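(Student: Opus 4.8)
The plan is to reduce the integral inequality \eqref{GE1}, via the termwise evaluation \eqref{GE2}, to the positivity of the single function
$$\Phi(x) = \F\left[\begin{array}{c} \frac{\alpha-\beta+1}{2}\\ \alpha+1,\,\frac{\alpha-\beta+1}{2}+\gamma+1\end{array}\biggl| -\frac{\,x^2}{4}\right]\qquad(x>0),$$
and then to invoke the criteria of Theorems \ref{theoremN1} and \ref{theoremN2} under the identification
$$a = \frac{\alpha-\beta+1}{2},\qquad b = \alpha+1,\qquad c = a+\gamma+1.$$
Since $\,\alpha>-1,\,\gamma>-1,\,\beta<\alpha+1,\,$ each of $a, b, c$ is positive, and the beta-function prefactor $B(\gamma+1, a)$ together with $x^{\alpha-\beta+2\gamma+1}$ is strictly positive; hence the sign of the integral in \eqref{GE1} coincides with that of $\Phi$ throughout the admissible range.

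For part (i) I would simply apply the necessity condition \eqref{ND1} of Theorem \ref{theoremN1}. Here $b>a$ translates at once into $\,\beta>-\alpha-1,\,$ while $c>a$ is automatic from $\,\gamma+1>0\,$ and yields nothing new; the remaining inequality $\,b+c\ge 3a+1/2\,$ collapses, after cancellation, to $\,\beta\ge-(\gamma+1/2).\,$ Combined with the standing hypothesis $\,\beta<\alpha+1,\,$ this is exactly the asserted necessary condition.

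For part (ii) the substance is the translation of the membership $\,(b,c)\in P_a^*\,$ into a condition on $\,(\alpha,\beta,\gamma).\,$ I would treat the two boundary inequalities separately. The linear constraint $\,c\ge 3a+1/2-b\,$ reduces again to $\,\beta\ge-(\gamma+1/2).\,$ The rational constraint $\,c\ge a+ a/2(b-a)\,$ — using $\,b-a = (\alpha+\beta+1)/2\,$ and $\,c-a=\gamma+1\,$ — becomes $\,2(\gamma+1)(\alpha+\beta+1)\ge\alpha-\beta+1,\,$ which, since $\,2\gamma+3>0\,$ on the range $\gamma>-1$, rearranges to the clean linear bound $\,\beta\ge -\frac{2\gamma+1}{2\gamma+3}(\alpha+1).\,$ Taking the maximum of the two lower bounds for $\beta$ recovers precisely the defining inequality of $\,\mathcal{S}_\gamma^*,\,$ so Theorem \ref{theoremN2} delivers strict positivity of $\Phi$ on $\,\mathcal{S}_\gamma^*\setminus\Lambda.\,$ It then remains to pin down $\Lambda$: solving $\,(b,c)=(a+1/2,\,2a)\,$ gives $\,\alpha=\gamma+1/2,\,\beta=-(\gamma+1/2),\,$ and $\,(b,c)=(2a,\,a+1/2)\,$ gives $\,\gamma=-1/2,\,\beta=0,\,$ on which $\Phi$ is a squared $\J$ by Remark \ref{remarkSF}, nonnegative but with infinitely many zeros. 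These are exactly the two excluded cases in the statement.

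Because the two positivity criteria carry the analytic weight, there is no deep obstacle here; the only point requiring genuine care is the algebra of the rational boundary. In particular I would keep track of the sign of $\,2\gamma+3\,$ (positive throughout $\gamma>-1$), since this is what turns the rearrangement into a lower bound for $\beta$ rather than an upper one, and I would verify that at the two exceptional points both defining inequalities of $P_a^*$ hold with equality, so that these points lie on the boundary $\,P_a^*\cap\Lambda\,$ and the nonnegativity assertion is consistent.
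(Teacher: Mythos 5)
Your proposal is correct and follows essentially the same route as the paper: reduce via \eqref{GE2} to the ${}_1F_2$ function, set $a=\frac{\alpha-\beta+1}{2}$, $b=\alpha+1$, $c=a+\gamma+1$, apply the necessity part of Theorem \ref{theoremN1} and the sufficiency of Theorem \ref{theoremN2}, and identify the two degenerate cases with the squared $\J$ functions of Remark \ref{remarkSF}. The only difference is that you write out the algebra (in particular the rearrangement $2(\gamma+1)(\alpha+\beta+1)\ge\alpha-\beta+1$ into $\beta\ge-\frac{2\gamma+1}{2\gamma+3}(\alpha+1)$) that the paper leaves as "straightforward to verify," and your computations check out.
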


\smallskip

\paragraph{Proof of Theorem \ref{theoremN4}.}
In view of \eqref{GE2}, it suffices to deal with
\begin{equation}
\Sigma(x) = \F\left[\begin{array}{c} \frac{\alpha-\beta+1}{2}\\ \alpha+1,
\frac{\alpha-\beta+1}{2} +\gamma+1\end{array} \biggl| -\frac{\,x^2}{4}\right]\qquad(x>0).
\end{equation}

On setting
$$ a=  \frac{\alpha-\beta+1}{2}\,,\,\,b=\alpha+1,\,\,c=  \frac{\alpha-\beta+1}{2} +\gamma+ 1$$
and applying the necessity part of Theorem \ref{theoremN1}, Theorem \ref{theoremN2} in the same way as in the proof of
Theorem \ref{theoremN3}, it is straightforward to verify (i), (ii).

As for the cases of nonnegativity, we note from \eqref{SF3} of Remark \ref{remarkSF} that
if $\,\alpha=\gamma+1/2,\,\beta=-(\gamma+1/2),\,\gamma>-1, \,$ then
\begin{equation}
\Sigma(x) = \F\left[\begin{array}{c} \gamma+1\\ \gamma+\frac 32,
2(\gamma+1)\end{array} \biggl| -\frac{\,x^2}{4}\right] = \J_{\gamma+\frac 12}^2\left(\frac x2\right).
\end{equation}
On the other hand, if $\,\alpha>-1, \,\beta=0,\, \gamma=-1/2,\,$ then
\begin{equation}
\Sigma(x) = \F\left[\begin{array}{c} \frac{\alpha+1}{2}\\ \alpha+1,
\frac{\alpha+2}{2}\end{array} \biggl| -\frac{\,x^2}{4}\right] = \J_{\frac\alpha 2}^2\left(\frac x2\right).
\end{equation}
Both identities show $\,\Sigma\ge 0\,$ with infinitely many positive zeros.\qed

\begin{figure}[!h]
 \centering
 \includegraphics[width=300pt, height=200pt]{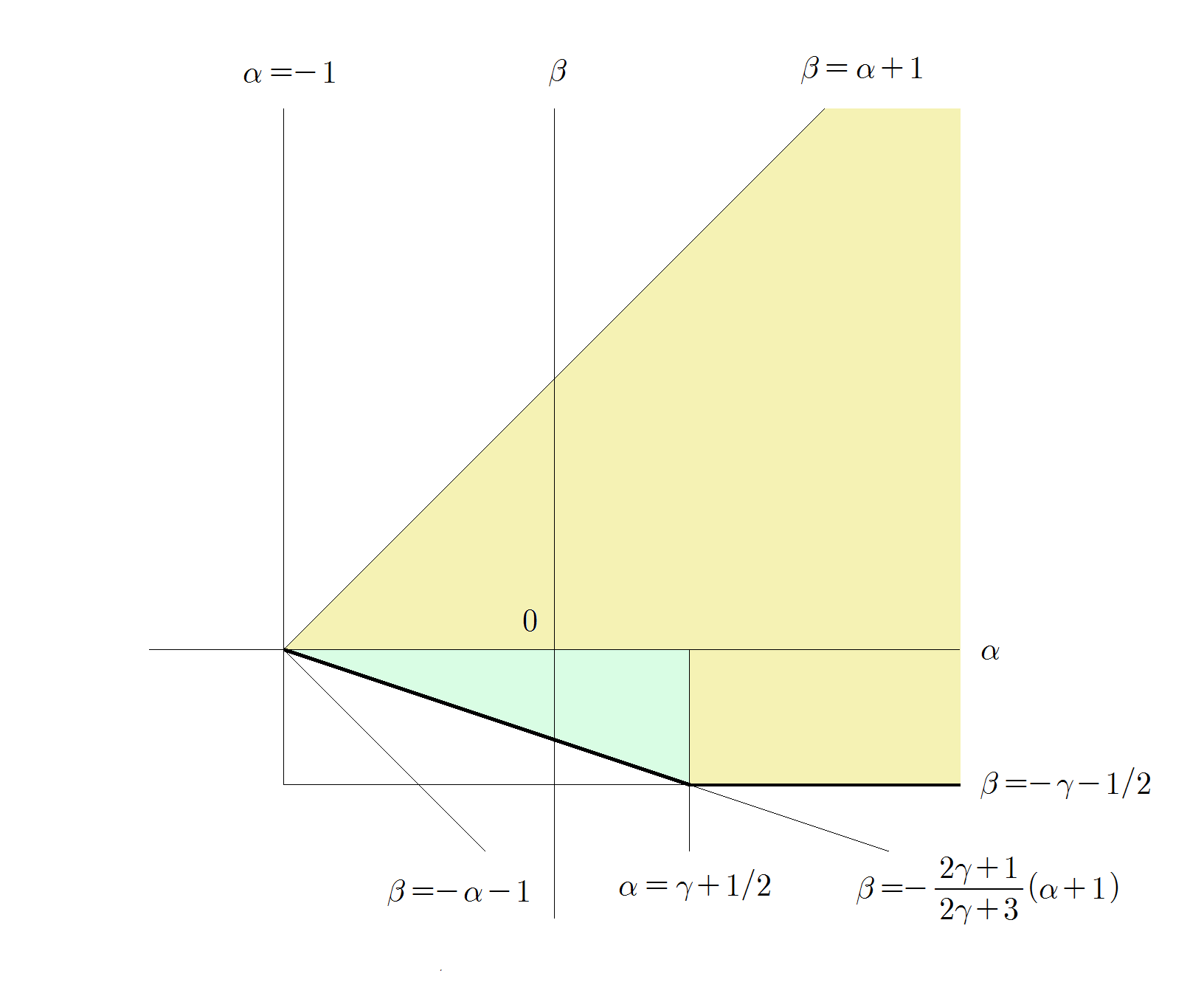}
 \caption{The improved positivity region for problem \eqref{GE1} in the case $\,\gamma>-1/2\,,$ where
 the yellow-colored part represents Gasper's region.}
\label{Fig5}
\end{figure}

\bigskip

\begin{figure}[!h]
 \centering
 \includegraphics[width=300pt, height=220pt]{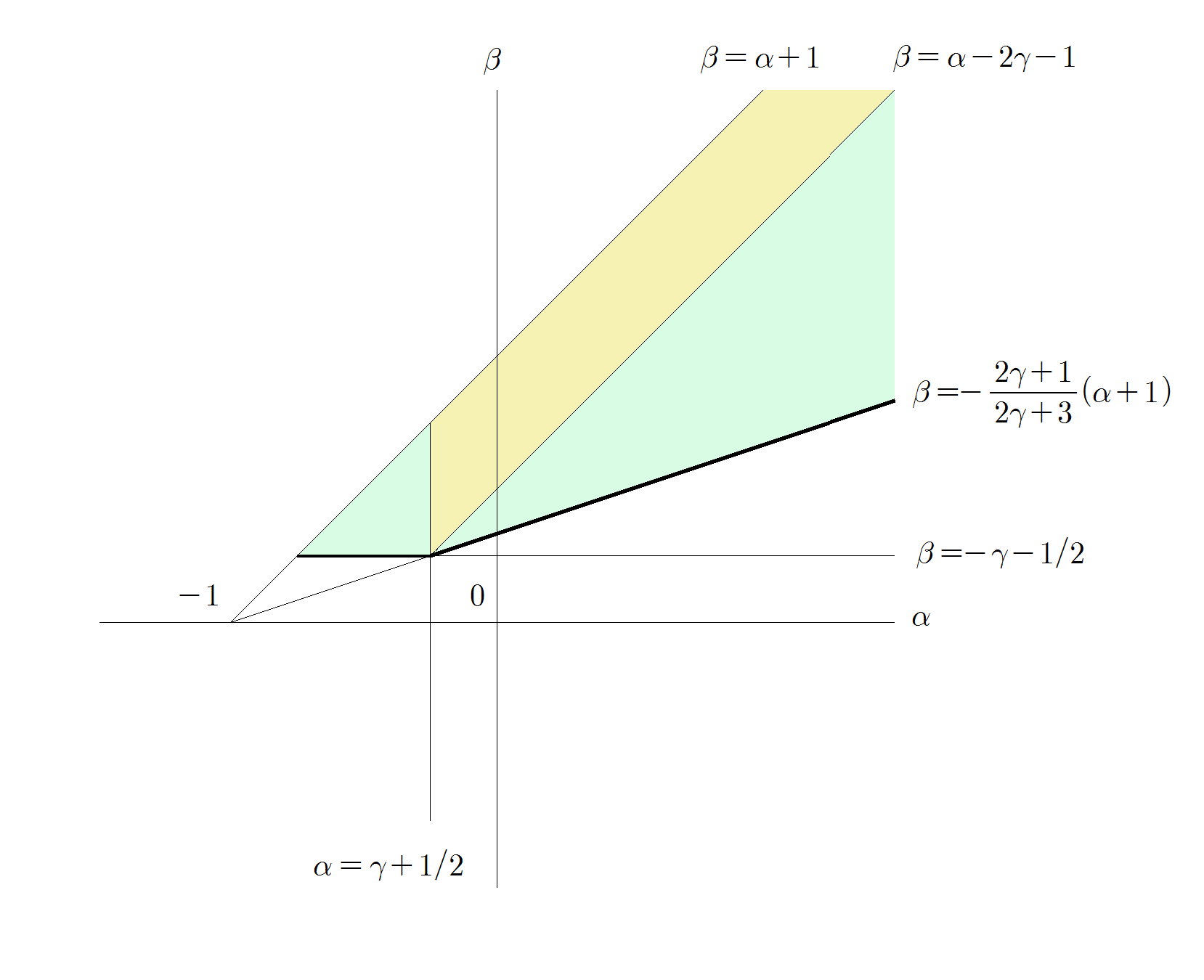}
 \caption{The improved positivity region for problem \eqref{GE1} in the case $\,-1<\gamma< -1/2,\,$ where
  the yellow-colored part represents Gasper's region.}
\label{Fig6}
\end{figure}

\begin{remark} As for the missing ranges, we point out the following:
\begin{itemize}
\item In the case $\,\gamma>-1/2,\,$ as shown in Figure \ref{Fig5}, Theorem \ref{theoremN4} leaves the triangle
formed by the boundary lines
$$\beta=-\alpha-1, \,\,\beta= -(\gamma+1/2),\,\,\beta= -\,\frac{2\gamma+1}{2\gamma+3} (\alpha+1)$$
and it is an open question if it is possible to give a necessary and sufficient condition
in terms of certain transcendental root $\beta_\gamma(\alpha)$ in an analogous manner with the case
$\,\gamma=0.$
\item In the case $\,\gamma=-1/2,\,$ problem \eqref{GE1} is completely resolved in the sense that it holds if
and only if $\,\alpha>-1, \,0\le\beta<\alpha+1.$
\item In the case $\,-1<\gamma<-1/2,\,$ as shown in Figure \ref{Fig6}, Theorem \ref{theoremN4} leaves
the infinite sector defined by
$$\alpha> \gamma+1/2,\, \,\,-(\gamma+1/2)\le\beta<-\,\frac{2\gamma+1}{2\gamma+3} (\alpha+1).$$
\end{itemize}
\end{remark}

\bigskip

{{\large \bf Acknowledgements.}} Yong-Kum Cho is supported by the National Research Foundation of Korea Grant
funded by the Korean Government \# 20160925. Seok-Young Chung is supported by the Chung-Ang University
Excellent Student Scholarship in 2017. Hera Yun is supported by the Chung-Ang University
Research Scholarship Grants in 2014--2015.

\bigskip

\noindent
Yong-Kum Cho (ykcho@cau.ac.kr), Seok-Young Chung (sychung@cau.ac.kr) and Hera Yun (herayun06@gmail.com)

\medskip

\noindent
Department of Mathematics, College of Natural Science,
Chung-Ang University, 84 Heukseok-Ro, Dongjak-Gu, Seoul 06974, Korea

\end{document}